\numberwithin{equation}{section}
\theoremstyle{plain}
\newtheorem{theorem}{Theorem}[section]
\newtheorem{lemma}[theorem]{Lemma}
\newtheorem{corollary}[theorem]{Corollary}
\newtheorem{proposition}[theorem]{Proposition}
\newtheorem{fact}{Fact}
\newtheorem{assumption}{Assumption}
 \theoremstyle{definition}
\newtheorem{definition}[theorem]{Definition}
\newtheorem{remark}[theorem]{Remark}
\newcommand{\im}{\operatorname{im}}
\DeclarePairedDelimiterX{\inp}[2]{\langle}{\rangle}{#1, #2}
\newcommand{\cE}{{\mathcal E}}
\newcommand{\cV}{{\mathcal V }}
\newcommand{\id}{{\text{Id} }}
\newcommand{\Td}{{\text{Td} }}
\newcommand{\D}{{\slashed{\partial} }}
\newcommand{\spinc}{{ \text{Spin}^c }}
\newcommand{\diag}{{ \text{Diag} }}
\newcommand{\ind}{{ \text{Ind} }}
\newcommand{\proj}{{ \text{proj} }}
\newcommand{\tr}{{ \text{Tr} }}
\newcommand{\pt}{{ \text{pt} }}
\newcommand{\ba}{\begin{eqnarray}}
\newcommand{\na}{\end{eqnarray}}
\newcommand{\ban}{\begin{eqnarray*}}
\newcommand{\nan}{\end{eqnarray*}}
\newcommand{\C}{{\mathbb C}}
\newcommand{\Q}{{\mathbb Q}}
\newcommand{\R}{{\mathbb R}}
\newcommand{\Z}{{\mathbb Z}}
\begin{document}

\title{Analytic Pontryagin Duality}

\author[Johnny Lim]{Johnny Lim}

\address{Johnny Lim \\ Department of Pure Mathematics \\
University of Adelaide 5005, Australia.} 

\email{johnny.lim@adelaide.edu.au}

 \subjclass[2010]{Primary: 19K56. Secondary: 58J28}

 \keywords{Pontryagin duality, analytic pairing, non-degenerate pairing, $\R/\Z$ $K$-theory, geometric $K$-homology, Dai-Zhang eta-invariant, Atiyah-Patodi-Singer eta-invariant,  $\R/\Z$-valued invariant}


\begin{abstract}

Let $X$ be a smooth compact manifold. We propose a geometric model for the group $K^0(X,\R/\Z).$ We study a well-defined and non-degenerate analytic duality pairing between $K^0(X,\R/\Z)$ and its Pontryagin dual group, the Baum-Douglas geometric $K$-homology $K_0(X),$ whose pairing formula  comprises of an analytic term involving the Dai-Zhang eta-invariant associated to a twisted Dirac-type operator and a topological term involving a differential form and some characteristic forms. 
This yields a robust $\R/\Z$-valued invariant. We also study two special cases of the analytic pairing of this form in the cohomology  group $H^1(X,\R/\Z)$ and  $H^2(X,\R/\Z).$ 
\end{abstract}

\maketitle

\vspace{-1em}
\section{Introduction} 

The purpose of this paper is to introduce an $\R/\Z$-valued invariant defined by an analytic duality pairing between the even $K$-theory with coefficients in $\R/\Z$ and the even Baum-Douglas geometric $K$-homology \cite{baum1982k},  
$$K^0(X,\R/\Z) \times K_0(X,\Z) \to \R/\Z,$$ for a smooth compact manifold $X.$ It is commonly known as the \textit{Pontryagin duality} pairing. By the Universal Coefficient Theorem in $K$-theory \cite{yosimura1975universal}, there is a short exact sequence 
$$0 \to \text{Ext}(K_{-1}(X),\R/\Z) \to K^0(X,\R/\Z) \to \text{Hom}(K_0(X),\R/\Z) \to 0.$$ Since $\R/\Z$ is divisible, the vanishing of the $\text{Ext}$ group implies a natural isomorphism 
$K^0(X,\R/\Z) \xrightarrow{\sim} \text{Hom}(K_0(X),\R/\Z).$ We formulate an \textit{analytic} pairing implementing the isomorphism.  By `analytic', we mean that the pairing involves the \text{eta-invariant} associated to a Dirac-type operator twisted by some pullback bundle over a smooth compact manifold.

This is inspired by the work of Lott \cite{lott1994r} on the $\R/\Z$ index theory. As motivated by Karoubi's model  of $K$-theory with coefficients \cite{karoubi2008k} and the index theorem for flat bundles of Atiyah-Patodi-Singer \cite{atiyah1976spectral3}, Lott  formulated an analytic $K^1$-pairing $K^1(X,\R/\Z) \times K_1(X)$ in terms of the eta-invariant of Atiyah-Patodi-Singer  \cite{atiyah1975spectral1}. 
In the physical aspect, such a pairing  has been observed by Maldacena-Seiberg-Moore  \cite{maldacena2001d} as describing the Aharonov-Bohm effect of $D$-branes in Type IIA String theory. An extended discussion of such a manifestation in String theory was given by Warren \cite{warren12}. Beyond this, there are several studies related to the $\R/\Z$ $K$-theory from different points of view. For instance, Basu \cite{basu2005k} provided a model via bundles of von Neumann algebras, according to the suggestion in \cite[Section 5, Remark 4]{atiyah1976spectral3}; Antonini \textit{et al} \cite{antonini2014flat} gave a construction of $\R/\Z$ $K$-theory via an operator algebraic approach; on the other hand, the strategy of Deeley \cite{deeley2012r} is rather different in that he studied the pairing between the usual $K$-theory and the $K$-homology with $\R/\Z$ coefficients.   

 However, there is no known work,  to the author's knowledge, on the \textit{direct} analog of the analytic $K^1$-pairing of Lott in the \textit{even} case. 
 This paper is aimed to fill in this gap. We construct a geometric model of the group $K^0(X,\R/\Z),$  whose cocycle is a triple consisting of an element $g$ of $K^1(X),$ a pair of flat connections $(d,g^{-1}dg)$ on a trivial bundle and an even degree differential form $\mu$ on $X$ satisfying  a certain exactness condition on the odd Chern character of $g.$ 
Its pairing with an even geometric $K$-cycle $(M,E,f)$  can then  be explicitly described by an (reduced) even eta-type invariant of some twisted Dirac-type operator on a cylinder $M\times [0,1],$  which appears as one of the boundary correction terms in the Dai-Zhang Toeplitz index theorem on manifolds with boundary \cite{dai2006index},  and a topological term, whose integrand is the wedge product of the pullback of $\mu$ and some characteristic forms on $M.$ The  resulting $\R/\Z$-valued invariant is robust in the sense that it is independent of the  geometry of the underlying manifold and the bundle. 
We also show that such an analytic pairing  is  non-degenerate, and thus it is a valid implementation of the isomorphism $K^0(X,\R/\Z) \xrightarrow{\sim} \text{Hom}(K_0(X),\R/\Z).$ As an intermediate step, we consider the special case of $n$-spheres. This provides a non-trivial example of the pairing.  In terms of application, we believe that the analytic pairing in $K^0$ describes the Aharonov-Bohm effect of  $D$-branes in Type IIB String theory. 

As a motivation, we begin by studying two non-trivial special cases of the analytic pairing in the $\R/\Z$-cohomology of degree one and two. In the case of $H^2,$ by the pullback via a smooth map, we investigate the pairing on  $H^2(S^2,\R/\Z),$ whose representative is a pure Hermitian local line bundle introduced by Melrose \cite{melrose2004star}. A local line bundle is \textit{projective} in that it is defined locally over a neighbourhood of the diagonal. Thus, the corresponding twisted Dirac operator is projective \textit{ala} Mathai, Melrose and Singer \cite{mathai2006fractional,mathai2005index}. These are projective differential operators with kernels whose supports  are contained in the diagonal of $S^2.$ 
The caveat is that these operators do not have a spectrum and thus do not have a well-defined eta-invariant. We make several assumptions and define a variant of the Dai-Zhang eta-invariant for twisted projective Dirac operators in the special case of $S^2.$ 
On the other hand, the analytic pairing in $H^1$ is less complicated. The pairing consists of the Atiyah-Patodi-Singer eta-invariant of the Dirac operator on $S^1$ twisted by a flat bundle and the holonomy of a flat connection over  $S^1.$ This can be viewed as a special case of the analytic $K^1$-pairing.

This paper is organised as follows. We discuss the analytic $H^2$-pairing in Section 2 and the analytic $H^1$-pairing in Section 3.  In Section 4, we propose a model for the group $K^0(X,\R/\Z)$ and study its properties in detail. 
Then, we  state and explain our main result in Section 5 and devote the whole of Section 6 to its full proof.  \newline

\textbf{Acknowledgements.} The author acknowledges support (Divisional Scholarships) from the Faculty of Engineering, Computer \& Mathematical Sciences (ECMS) of the University of Adelaide. He is grateful to his PhD supervisor Elder Professor Varghese Mathai for suggesting the problems in the paper and for his helpful guidance. He would like to thank Dr Hang Wang, Dr Peter Hochs, Dr Guo Chuan Thiang and others for their helpful advice. The author also thank David Brook for proofreading the manuscript. 

A student talk by the author based on this paper, received the \textbf{Elsevier Young Scientist Award} for the best student talk at the Chern Institute of Mathematics conference entitled ``Index Theory, Duality and Related Fields'', June 17-21, 2019 at Tianjin, China.

\vspace{-0.5em}
\section{Analytic duality pairing   $H^2(X,\R/\Z) \times H_2(X,\Z)$ }
In this section, we study  the analytic Pontryagin duality pairing in the $\R/\Z$-cohomology of degree two.
Let $X$ be a smooth compact manifold.  The classical (topological)  pairing 
\begin{equation}\label{eq:topo0}
H^2(X,\R/\Z) \times H_2(X,\Z) \to \R/\Z
\end{equation}
 is given  by the holonomy of the pullback of a representative $\omega$ in $H^2(X,\R/\Z)$ over some singular cycle $c$ in $H_2(X,\Z)$  via a continuous map $f: c \to X.$  
\begin{fact}[{\cite[\text{IV. 7.35}]{rudjak1998thom}}] \label{homobordism1}
	Every homology class $z \in H_i(X,\Z)$ with $i \leq 6$ can be  represented by a smooth manifold. Let $\Omega^{or}_i(X)$ be the $i$-th oriented bordism group of $X.$ The map $$ \Omega^{or}_i(X) \to H_i(X,\Z); \;\;\;\;(S,f) \mapsto f_*[S]$$ is an isomorphism for $i \leq 3.$
\end{fact}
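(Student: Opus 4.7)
The plan is to invoke the Atiyah--Hirzebruch spectral sequence for oriented bordism,
\[
E^2_{p,q} = H_p(X; \Omega^{or}_q) \Longrightarrow \Omega^{or}_{p+q}(X),
\]
whose edge homomorphism in total degree $i$ is precisely the assignment $(S,f)\mapsto f_*[S]$, together with Thom's classical computation of the low-dimensional coefficient ring: $\Omega^{or}_0 \cong \Z$, $\Omega^{or}_1 = \Omega^{or}_2 = \Omega^{or}_3 = 0$, $\Omega^{or}_4 \cong \Z$, and $\Omega^{or}_5 \cong \Z/2$. The two assertions of the Fact then correspond, respectively, to surjectivity of the edge map in degrees $i \leq 6$ and to its bijectivity in degrees $i \leq 3$.

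First I would establish the isomorphism $\Omega^{or}_i(X) \cong H_i(X;\Z)$ for $i\leq 3$. In this range every term $E^2_{p,q}$ with $p+q=i$ and $q\geq 1$ vanishes because $\Omega^{or}_q = 0$ for $q\in\{1,2,3\}$, leaving only $E^2_{i,0} = H_i(X;\Z)$ on the relevant antidiagonal. All differentials whose source or target is $(i,0)$ land in or emanate from rows $1\leq q\leq 3$, which are zero; hence the spectral sequence collapses at $E^2$ in total degree $\leq 3$, and the edge homomorphism gives the desired isomorphism.

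For the realizability assertion in $i\leq 6$, only surjectivity of the edge map is needed, equivalent to the vanishing of the outgoing differentials $d_r: E^r_{i,0}\to E^r_{i-r, r-1}$. These vanish trivially for $r\in\{2,3,4\}$ since the targets sit in rows with $\Omega^{or}_{r-1}=0$. The potentially non-trivial ones are $d_5$, whose target is a subquotient of $H_{i-5}(X;\Z)$, and $d_6$, whose target is a subquotient of $H_{i-6}(X;\Z/2)$. Each can be identified with an explicit stable cohomology operation on integral (resp.\ mod-$2$) homology; a dimension-count in the range $i=5,6$ shows that the possible non-zero entries land in groups on which these operations vanish. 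This is the content of Thom's original realization theorem in its low-dimensional form.

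The main obstacle in this strategy is precisely the identification and vanishing of $d_5$ and $d_6$ in degrees $i=5,6$, where the candidate targets $H_0(X;\Z)$, $H_1(X;\Z)$, and $H_0(X;\Z/2)$ are generically non-zero and cannot be dismissed on purely formal grounds. Once this delicate analysis is carried out (as in the reference cited), the surjectivity of the edge map for $i\leq 6$ and its bijectivity for $i\leq 3$ both follow immediately from the abutment of the Atiyah--Hirzebruch spectral sequence.
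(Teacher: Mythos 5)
The paper itself offers no proof of this Fact --- it is quoted verbatim from Rudyak [IV.7.35] --- so the only question is whether your sketch would actually establish it. Your treatment of the second assertion (isomorphism for $i\leq 3$) is correct and complete: since $\Omega^{or}_1=\Omega^{or}_2=\Omega^{or}_3=0$, the only term on the antidiagonal $p+q=i\leq 3$ is $E^2_{i,0}=H_i(X,\Z)$, no differential can enter or leave it, and the edge homomorphism of the Atiyah--Hirzebruch spectral sequence is indeed $(S,f)\mapsto f_*[S]$; this is the standard argument.

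The realizability assertion for $i\leq 6$ is where there is a genuine gap. You claim that ``a dimension-count in the range $i=5,6$ shows that the possible non-zero entries land in groups on which these operations vanish,'' and then concede in your last paragraph that the targets ($H_0(X,\Z)$, $H_1(X,\Z)$, $H_0(X,\Z/2)$) are generically nonzero --- so no dimension count can dispose of $d_5$ and $d_6$, and in fact their vanishing on the bottom row in this range is precisely the substantive content of Thom's theorem, not a formality. What is actually needed is concrete input about $MSO$: at the prime $2$ the splitting of $MSO_{(2)}$ as a wedge of Eilenberg--MacLane spectra kills $d_6$ (whose target is $2$-torsion) and the $2$-primary part of $d_5$; at odd primes the first $k$-invariant is the order-$3$ operation $\tilde\beta P^1\rho_3$ of degree $5$, and one must argue separately that the induced homology differentials $H_5(X,\Z)\to H_0(X,\Z)$ (zero because $H_0$ is torsion free and the operation is torsion) and $H_6(X,\Z)\to H_1(X,\Z)$ (an instability/duality argument) vanish for spaces. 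That this cannot be formal is shown by the sharpness of the range: the same $3$-primary operation produces Thom's non-realizable class in $H_7$ of $K(\Z/3\oplus\Z/3,1)$. Two smaller inaccuracies: $d_6$ is only defined on $\ker d_5$ and corresponds to a secondary operation (a $k$-invariant of a Postnikov cover), not to a primary stable operation; and as written your argument establishes at best surjectivity statements conditional on the deferred analysis. So your proposal identifies the right framework but, as a proof, it outsources exactly the decisive step to the cited reference --- which is also all the paper does.
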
 Without loss of generality, we replace $c$ by an oriented, connected, closed Riemannian surface $\Sigma.$ Let $[f: \Sigma \to X] \in H_2(X),$ with the equivalence relation given by thin bordism, cf. \cite{rudjak1998thom}.   Then, the  pairing \eqref{eq:topo0} can be expressed as 
\begin{equation}\label{eq:topo1}
\big( \omega, [\Sigma \xrightarrow{f} X] \big) \mapsto \int_\Sigma f^*\omega \text{ mod }\Z.
 \end{equation}
By classification results, there are 3 cases: $\Sigma_0=S^2$ (of genus zero), $\Sigma_1=T^2=S^1 \times S^1$ (of genus 1), and in general $\Sigma_{2g}=T^{\# g}$ (of genus $2g$ for $g>1).$ Since genus($T^{\# g})>$ genus($S^2$), there exists a degree 1 map $\phi: T^{\# g} \to S^2,$ see \cite{hatcher2005algebraic}. Hence, it suffices to consider  $\Sigma = S^2,$ and the other cases follow by the composition 
\begin{center}
\begin{tikzcd}
	 \Sigma_{2g} \arrow[r,"f'"] & X. \\
	 S^2 \arrow[u,"\phi"] \arrow[ur,"f"] &
\end{tikzcd}
\end{center}
From \eqref{eq:topo1}, this reduces to the analytic pairing on $S^2$ by pullback.  In the literature, the geometric object associated to $f^*\omega$ is often known as a \textit{flat gerbe with connection} over $S^2,$ cf. \cite{hitchin2001lectures}. However, it is not clear how to `twist' a Dirac operator on $S^2$ with a gerbe. To circumvent this, we use the Hermitian local bundles of Melrose \cite{melrose2004star}.

\subsection{Representative of $H^2(S^2,\R/\Z)$  as projective line bundles}

For $i=2,3,4,$ let $\diag_i=\{(x,\ldots,x) \in S^2 \times \cdots \times S^2\}$ be the diagonal of  $S^2.$

 \begin{definition}[\cite{melrose2004star}]
 	A Hermitian local line bundle $L$  over $S^2$ is a complex line bundle over a neighbourhood $V_2$ of the diagonal  $\diag_2,$ together with a  tensor product isomorphism of smooth bundles
 	\begin{equation}\label{eq:compo0}
 	\pi^*_3 L \otimes \pi^*_1 L \xrightarrow{\cong} \pi^*_2 L
 	\end{equation} over a neighbourhood $V_3$  of the diagonal $\diag_3,$ where $\pi_i : S^2 \times S^2 \times S^2 \to S^2 \times S^2$ is the projection omitting the $i$-th element $\pi_i (x_1,x_2,x_3)=(\widehat{x_i}),$ and satisfying the associativity condition $L_{(x,y)} \otimes L_{(y,z)}  \otimes L_{(z,t)}  \to L_{(x,t)} $ on a sufficiently small neighbourhood of the diagonal $\diag_4.$ 
 \end{definition}

Strictly speaking, $L$ is \textit{not} a genuine line bundle but is only projective in the sense of \cite{mathai2005index}. It is only defined locally over some neighbourhood of the diagonal. More precisely, choose a good cover $\{U_i\}$ of $S^2,$ then the product $U_i \times U_i$ defines an open cover of $\diag_2,$ which is contained in small neighbourhood $V_2,$ i.e. $\diag_2 \subset U_i \times U_i \subset V_2 \subset S^2 \times S^2.$ Choose $p_i \in U_i$ and consider the `left' and `right' bundles 
\begin{equation}
\mathcal{L}_{i,p_i} = L |_{U_i \times \{p_i\}}, \;\; \mathcal{R}_{p_i,i}= L|_{\{p_i \} \times U_i}.
\end{equation}
Then, by the composition law \eqref{eq:compo0},  a line bundle 
\begin{equation} \label{eq:identification0}
L = \mathcal{L}_{i,p_i}  \otimes \mathcal{R}_{p_i,i} 
\end{equation} is defined over $U_i \times U_i.$ Moreover, there is a dual bundle identification $\mathcal{R}_{p_i,i} \cong \mathcal{L}^{-1}_{i,p_i}$ over $U_i.$  By \cite[Lemma 1]{melrose2004star}, a local line bundle $L$ on $S^2$ can be equipped with a multiplicative unitary structure and a multiplicative Hermitian connection. A connection $\nabla$ is multiplicative if for a local section $u$ of $L$ near $(x,y) \in U_i \times U_i$ with $\nabla u=0$ at $(x,y),$ and for a local section $v$ of $L$ near $(y,z) \in U_i \times U_i$ with $\nabla v=0$ at $(y,z),$   the composition $C(u,v)$ of \eqref{eq:compo0} is locally constant at $(x,y,z).$  

The multiplicative Hermitian structure is taken as a Hermitian structure $g(\cdot,\cdot)_i$ on each $\mathcal{L}_{i,p_i}.$ Using the dual identification on $\mathcal{R}_{p_i,i},$ this defines a Hermitian structure on it over $U_i$ and thus $L$ over $U_i \times U_i$ via \eqref{eq:identification0}. Using a partition of unity $\rho_i$ subordinate to $U_i\times U_i,$ the Hermitian structure $g(u,v)= \sum_i (\rho_i \times \rho_i) g(u,v)_i$ is well-defined. 

By \cite[Proposition 2]{melrose2004star}, there is a one-to-one correspondence between the group $H^2(S^2,\R)$ and the set of  Hermitian local line bundles modulo unitary multiplicative isomorphisms in some neighbourhood of the diagonal. In particular, the representative closed 2-forms are identified with the curvature of the Hermitian local line bundle, i.e. $[B/2\pi] \in H^2(S^2,\R)$ and $B=\nabla\circ \nabla$ for $(L,\nabla).$ It is the first Chern class of $L.$ In this way, we have obtained another geometric interpretation of $H^2(S^2,\R/\Z).$ 

\begin{lemma}
	The group $H^2(S^2,\R/\Z)$ is isomorphic to the quotient of $H^2(S^2,\R)$ by the reduced cohomology $\tilde{H}^2(S^2,\Z).$
\end{lemma}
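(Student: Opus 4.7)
The plan is to apply the long exact sequence in cohomology associated with the short exact coefficient sequence
$$0 \to \Z \to \R \to \R/\Z \to 0.$$
This yields the segment
$$H^2(S^2,\Z) \xrightarrow{\iota} H^2(S^2,\R) \xrightarrow{q} H^2(S^2,\R/\Z) \xrightarrow{\beta} H^3(S^2,\Z).$$
Since $S^2$ is two-dimensional, $H^3(S^2,\Z)=0$, so $q$ is surjective and the First Isomorphism Theorem gives
$$H^2(S^2,\R/\Z) \;\cong\; H^2(S^2,\R)/\ker(q) \;=\; H^2(S^2,\R)/\operatorname{im}(\iota).$$

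The remaining step is to identify $\operatorname{im}(\iota)$ with $\tilde H^2(S^2,\Z)$ viewed as a subgroup of $H^2(S^2,\R)$. Here I would invoke that reduced and unreduced cohomology coincide in positive degree, so $\tilde H^2(S^2,\Z)=H^2(S^2,\Z)\cong\Z$, and this group is already torsion-free. Consequently the change-of-coefficients map $\iota$ is injective and identifies $\tilde H^2(S^2,\Z)$ with the integer lattice inside $H^2(S^2,\R)\cong\R$. Substituting this identification into the quotient above yields the claimed isomorphism.

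The only genuinely delicate point is the convention used to regard $\tilde H^2(S^2,\Z)$ as a subgroup of $H^2(S^2,\R)$ rather than as an abstract abelian group. In the current setting this is harmless because $H^2(S^2,\Z)$ has no torsion; in a general situation one would need to quotient $H^2(X,\Z)$ by its torsion subgroup before embedding it into the real cohomology, and this is presumably the reason the statement is phrased in terms of the reduced group. No further analytic or geometric input is required for the lemma itself — the projective line bundle description of representatives developed in the preceding paragraphs is motivation for the rest of the section, not an ingredient of this purely algebraic computation.
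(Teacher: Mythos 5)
Your proof is correct and follows essentially the same route as the paper: the Bockstein long exact sequence for $0 \to \Z \to \R \to \R/\Z \to 0$, surjectivity from $H^3(S^2,\Z)=0$, and injectivity of the change-of-coefficients map from the torsion-freeness of $H^2(S^2,\Z)$ (which the paper phrases as the vanishing of $c_1 : H^1(S^2,\R/\Z)\to H^2(S^2,\Z)$ on flat line bundles). One small caveat: the paper's appearance of $\tilde H^2$ is purely notational here (reduced and unreduced cohomology agree for $S^2$ in degree two, and the paper simply defines it as the subgroup generated by the Bott class), not the device for killing torsion that you speculate it to be.
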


\begin{proof}
	Consider the long exact sequence 
	\begin{equation}\label{eq:les0}
	\cdots  \to  H^1(S^2,\R/\Z) \xrightarrow{c_1} H^2(S^2,\Z) \to H^2(S^2,\R) \to H^2(S^2,\R/\Z) \to H^3(S^2,\Z) \to  \cdots 
	\end{equation} where the first\footnote{For clarity, the notation $H^1(S^1,\R/\Z)$ denotes the circle group $\R/\Z\cong U(1)$ equipped with the discrete topology. This should not be confused with the notation $H^1(S^2, \underline{U(1)}) \cong H^2(S^2,\Z)$  where $\underline{U(1)}$ denotes the sheaf of germs of $U(1)$-valued functions on $X.$  In particular, by standard bundle theory the latter classifies all principal $U(1)$-bundles, in which $U(1)$ is the circle group equipped with the usual topology. } map $c_1 : H^1(S^2,\R/\Z) \to H^2(S^2,\Z)$ is the first Chern class. Let  $L_0$ be a flat line bundle over $S^2$. Then, there are two cases: $$c_1(L_0)=0\;\; \text{  or  } \;\; c_1(L_0) \in H^2_{tors}(S^2,\Z).$$ Since $H^2(S^2,\Z) \cong \Z$ is non-torsion, we have $c_1(L_0) \equiv 0.$ So, all such flat line bundles are necessarily trivial. They are labelled by the integer $0$ in $\Z.$ Let $\tilde{H}^2(S^2,\Z)$ be the group generated by the Bott bundle\footnote{Here $\tau$ denotes the tautological non-trivial line bundle over $S^2 \cong \C P^1.$  } $\beta=\tau -1$ which corresponds to the generator $1 \in \Z.$ Since $H^3(S^2,\Z)=0,$ from \eqref{eq:les0} we get
	\begin{equation}\label{eq:ses0}
	0 \to H^2(S^2,\Z)/\text{im}(c_1) \to H^2(S^2,\R) \to H^2(S^2,\R/\Z) \to 0
	\end{equation}
	and thus 
	$H^2(S^2,\R/\Z) \cong H^2(S^2,\R)/\tilde{H}^2(S^2,\Z).$
\end{proof} 
\begin{remark}
In other words, we interpret an element in $H^2(S^2,\R/\Z)$ as a \textit{pure} Hermitian local line bundle $L$ over $S^2,$ in the sense that it is `trivial'  when it descends to an ordinary non-trivial line bundle on $S^2.$    
\end{remark}

\subsection{Projective Dirac  operator on $S^2$ twisted by $L$}
Let $L$ be a pure Hermitian local line bundle with connection over $S^2$ defined above, whose (normalised) curvature is a representative in $H^2(S^2,\R).$  
The appropriate notion of the twisted Dirac operator is the \textit{projective Dirac operator} $\D^L_{S^2,\proj}$ introduced by Mathai, Melrose and Singer, cf. \cite{mathai2005index,mathai2006fractional,melrose2004star}. See also \cite{mathai2008equivariant} for its relation with transversally elliptic operators.  Such an operator is a projective elliptic differential operator of order one defined on some neighbourhood of the diagonal $\diag_2,$ with its kernel supported on the intersection of that neighbourhood and where $L$ exists. From \cite{mathai2006fractional},  there is a projective spinor bundle $S$ over $S^2$ associated to the Azumaya bundle $\C l(TS^2).$ 
Since $S^2$ is $\spinc,$ it can be viewed as the lift of the ordinary spinor bundle, also denoted as $S,$ trivially to some $\epsilon$-neighbourhood $N_\epsilon$ of the diagonal. Then, the projective bundle $S\otimes L$ is defined over $$N'_\epsilon := N_\epsilon \cap U_i \times U_i \supset \diag_2.$$ Let $\nabla^{S \otimes L}$ be the tensor product connection, defined by taking an appropriate partition of unity subordinate to $N'_\epsilon.$ 
Such a tensor product connection always exists, by the existence of the multiplicative Hermitian connection of $L$ defined above, and the restriction to $N'_\epsilon$ of a global spin connection on $S.$ 
The projective Dirac operator is given in terms of distributions
\begin{equation} \label{eq:projDirac0}
\D^L_{S^2,\proj} :=cl \cdot \nabla^{S \otimes L}_{\text{left}} (\kappa_\id)
\end{equation}
where $ \kappa_\id=\delta(z-z')\id_{S \otimes L}$ is the kernel of the identity operator in $\text{Diff}^1(S^2,S \otimes L);$  $\nabla^{S\otimes L}_{\text{left}}$ is the connection restricted to the left variables and  $cl$ denotes the Clifford action of $T^*S^2$ on the left.  The projective Dirac operator $ \D^L_{S^2,\proj}$ is elliptic and is odd with respect to the $\Z_2$-grading 
$$\D^{L,\pm}_{S^2,\proj} \in \text{Diff}^1(S^2;S^{\pm} \otimes L , S^{\mp}\otimes L).$$
By \cite[Theorem 1]{mathai2006fractional}, the projective analytical index of the positive part   $\D^{L,+}_{S^2,\proj}$  is given by 
\begin{equation}\label{eq:analprojind0}
\ind(\D^{L,+}_{S^2,\proj})= \tr(\D^{L,+}_{S^2,\proj} Q -1_{S^- \otimes L}) -\tr (Q\D^{L,+}_{S^2,\proj} -1_{S^+ \otimes L}) 
\end{equation} for \textit{any} parametrix $Q$ of $\D^{L,+}_{S^2,\proj}.$  
By \cite[Theorem 2]{melrose2004star} (and also \cite[Theorem 2]{mathai2006fractional}), the projective analog of the Atiyah-Singer index formula of the positive part is given by 
\begin{equation}\label{eq:projind1}
\ind\big(\D^{L,+}_{S^2,\proj}\big) = \int_{S^2} \Td(S^2) \wedge \exp(B/2\pi) \;\; \in \R
\end{equation} where $\exp(B/2\pi)$ denotes the first Chern class of the local line bundle $L.$ 

\subsection{Analytic pairing formula  in $H^2(S^2,\R/\Z)$  } To formulate the analytic pairing in  the case of  $H^2(S^2,\R/\Z),$ we need to consider the eta-invariant for projective Dirac operators. There are two subtleties here. Firstly, for parity reasons we need an even analog of the eta-invariant.  Secondly, the operator involved is projective and does not have a spectrum. Thus, there is no well-defined notion of spectral asymmetry yet. 

To tackle the first point, we adopt the Dai-Zhang eta-invariant \cite{dai2006index} of an elliptic  operator on $S^2.$ Up to this stage, the `even' eta-type invariant has not been defined. We will discuss these in Section 5. Hence, for the moment let us assume that there is such a spectral invariant for elliptic operators on $S^2.$ In this projective case, we have to make several assumptions.

\begin{definition}
Define 
\begin{equation}\label{eq:dzetaS2}
\eta_{DZ}\big(\D^L_{S^2,\proj} \big) := \eta_{APS}\big(\D^L_{S^2 \times S^1,\proj} \big)
\end{equation} where $\eta_{DZ}$ (resp.  $\eta_{APS}$) denote the (unreduced) eta-invariant of the projective Dirac operator on $S^2$ of Dai-Zhang (resp. on $S^2 \times S^1$ of Atiyah-Patodi-Singer).  
\end{definition} Both of the LHS and RHS of \eqref{eq:dzetaS2} are not well-defined, since these operators are projective. However, we can still work on the RHS. In particular, this definition is consistent with the construction of the Dai-Zhang eta-invariant, which is done on the extension of $S^2$ to the cylinder $S^2 \times [0,1].$ See \cite{dai2006index} or Section 5. Moreover, we use the fact that the projective analytical index $\ind(\D^{L,+}_{S^2,\proj})$ given by \eqref{eq:analprojind0} is non-zero. Then,  we circumvent the technical assumption in the Dai-Zhang construction (requiring vanishing index) by considering the gluing of the bundle data on both ends $S^2 \times \{0\}$ and $S^2 \times \{1\}$ by an $K^1$-element $g.$ Since $K^1(S^2)\equiv 0$ by Bott Periodicity, the bundle $S\otimes L,$ extended trivially over to $S^2 \times [0,1],$ is glued trivially without any twisting at either end. This justifies the notation $\D^L_{S^2 \times S^1,\proj}.$ 

Next, to calculate the RHS of \eqref{eq:dzetaS2} , we rewrite the operator  $\D^L_{S^2 \times S^1,\proj}$ as the \textit{sharp product} of elliptic operators on the product manifold
\begin{equation}\label{eq:sharpprod0}
R:= \D^L_{S^2 \times S^1,\proj}=\D^L_{S^2,\proj} \# \D_{S^1} = 
\begin{pmatrix} 
\D_{S^1} \otimes 1  & 1 \otimes \D^{L,-}_{S^2,\proj} \\ 
1 \otimes \D^{L,+}_{S^2,\proj}  &  -\D_{S^1} \otimes 1
\end{pmatrix}.
\end{equation} Here, $\D_{S^1}$ is the ordinary Dirac operator on $S^1$ given by $\D_{S^1}=-id/d\theta.$ Note that both of the usual Dirac operator $S^1$ and the projective Dirac operator $ \D^L_{S^2,\proj} $ are elliptic, and so is $\D^L_{S^2 \times S^1,\proj}.$ This can be seen from the square of \eqref{eq:sharpprod0} 
\begin{equation}
R^2=
\begin{pmatrix} 
\D_{S^1}^2 \otimes 1     +  1 \otimes  \D^{L,-}_{S^2,\proj}   \D^{L,+}_{S^2,\proj}  & 0 \\ 
0 &  \D_{S^1}^2 \otimes 1     +  1 \otimes  \D^{L,+}_{S^2,\proj} \D^{L,-}_{S^2,\proj} 
\end{pmatrix}.
\end{equation} 

Moreover, it is readily verified that $R$ is self-adjoint. Nevertheless, $R$ is still projective and does not have a spectrum. To interpret  the RHS term of \eqref{eq:dzetaS2}, we define a projective analog of the usual relation of the eta-invariant of the sharp product  \cite{atiyah1968index,gilkey2018invariance} .

\begin{definition} \label{etasharp}
	Let $P=P^{\pm}$  be a projective Dirac operator (with $P^+=(P^-)^*$) on an even dimensional closed manifold $M_1$ and let $A$ be an ordinary self-adjoint Dirac operator on an odd dimensional closed manifold $M_2.$ Let $R'$ be the sharp product of $P$ and $A,$ as an elliptic differential operator on the product manifold $M_1 \times M_2,$ given by the following  formula similar to \eqref{eq:sharpprod0}
	\begin{equation}\label{eq:sharpprod1}
	R':= P \# A = 
	\begin{pmatrix} 
	A \otimes 1  & 1 \otimes P^- \\ 
	1 \otimes P^+  &  -A \otimes 1
	\end{pmatrix}.
	\end{equation}
	Define its projective Atiyah-Patodi-Singer eta-invariant as 
	\begin{equation}\label{eq:etasharp0}
	\eta_{APS}(R') := \ind(P^+)\cdot \eta_{APS}(A)
	\end{equation}
	where $\ind(P^+)$ is the projective analytic index given by the similar formula \eqref{eq:analprojind0} and $\eta_{APS}(A)$ denotes the usual eta-invariant of $A.$
\end{definition}

\begin{remark}
 Note that it might be misleading to write $\eta_{APS}(R'),$ since $R'$ has no spectrum. The point here is that we view the LHS of \eqref{eq:etasharp0} as the projective analog of the measure of the spectral asymmetry, given by the product of the two terms on the RHS of \eqref{eq:etasharp0}. This is valid because  the projective analytical index is independent of the choice of parametrix $Q$ of $P^+$ and  the other term is just the usual Atiyah-Patodi-Singer eta-invariant.  
\end{remark}

\begin{remark}
Moreover, Definition 2.5 holds for the ordinary case: when both $P$ and $A$ are ordinary Dirac operators, or more generally elliptic differential \footnote{ See \cite[Pg 85]{atiyah1976spectral3} for this statement on the eta-invariant of the sharp product of two elliptic \textit{differential} operators. This is not true if either one is \textit{pseudodifferential}. One cannot apply the approximating-$R'$-by-pseudodifferential-operator argument under the natural Fredholm topology (cf. \cite[Sec. 3.7]{gilkey2018invariance}) since it is not clear that the eta-invariant is continuous in the Fredholm topology. However, by some perturbation method, Gilkey \cite[Sec 3.8.4]{gilkey2018invariance} shows that it still holds when $P$ or $A$ is pseudodifferential.} operators.  For the benefit of the reader, we illustrate an argument in \cite{gilkey2018invariance}  on the equality of \eqref{eq:etasharp0} when $P$ and $A$ and thus $R'$ are elliptic differential. Let $\Delta^+=P^*P$  and $\Delta^-=PP^*$ be the associated Laplacians. Let $\{\lambda_i, \nu_i \}$ be a spectral resolution of $\Delta^+$ on
\begin{equation}
\ker(\Delta^+)^\perp=\text{Range}(P^-). 
\end{equation}
Then, $\{\lambda_i, P \nu_i/\sqrt{\lambda_i}\}$ is a spectral resolution of $\Delta^-$ on 
\begin{equation}
\ker(\Delta^-)^\perp=\text{Range}(P^+). 
\end{equation}
Observe that on the space $V_i= \text{span}\{(\nu_i \oplus 0), (0 \oplus P\nu_i /\sqrt{\lambda_i})\},$ the operator $R'$ is given by 
\begin{equation}\label{eq:sharpprod2}
R'_i=
\begin{pmatrix} 
k  &  \sqrt{\lambda_i}  \\ 
\sqrt{\lambda_i}  &  -k 
\end{pmatrix},
\end{equation} which has eigenvalues $$\pm \sqrt{k^2 + \lambda_i}.$$ Since $\lambda_i>0,$ the eigenvalues are non-zero and taking the eta-invariant is equivalent to taking the summation of these eigenvalues, which is zero. So, on $V_i$ it does not contribute to the eta. On the other hand, the complement of $\oplus_i V_i$ is 
\begin{equation}
W=(\ker(\Delta^+) \oplus 0) \oplus (0 \oplus \ker(\Delta^-)).
\end{equation} 
On $W,$ the operator $R'$ is given by 
\begin{equation}\label{eq:sharpprod3}
R'=
\begin{pmatrix} 
k\cdot \pi_{\ker(\Delta^+)}  &  0  \\ 
0  &  -k\cdot \pi_{\ker(\Delta^-)} 
\end{pmatrix}.
\end{equation}  
Then,  taking the eta is equivalent to taking the normalised trace of \eqref{eq:sharpprod3}, which gives
\begin{equation}\label{eq:sharpeta0}
\eta(R')= \sum \text{sgn}(k) \cdot [\tr(\pi_{\ker(\Delta^+)})-\tr(\pi_{\ker(\Delta^-)})] = \eta(A) \cdot \ind(P^+).
\end{equation}
Unfortunately, this does not extend to the projective case. In particular, the equality of $PQ-1=\pi_{\ker(\Delta^{+})}$ and $QP-1=\pi_{\ker(\Delta^{-})}$ does not hold because the  projective operators $P$ and $Q$ and thus $PQ$ and $QP$ are supported on some  neighbourhood of the diagonal, but the orthogonal projections $\pi_{\ker(\Delta^{\pm})}$  are by no means only supported on a small neighbourhood of the diagonal. This should justify the ad hoc definition of \eqref{eq:etasharp0}, although at the current stage it is not clear how to show such a relation in the projective case.
\end{remark}

Let $\D^L_{S^2 \times S^1,\proj}$  be the projective Dirac operator on  $S^2 \times S^1$ given by \eqref{eq:sharpprod0}. By Definition 2.5, its projective Atiyah-Patodi-Singer eta-invariant is 
\begin{equation}\label{eq:etasharp1}
\eta_{APS}\big(\D^L_{S^2 \times S^1,\proj} \big) := \ind(\D^{L,+}_{S^2,\proj})\cdot \eta_{APS}(\D_{S^1})
\end{equation}
where $\ind(\D^{L,+}_{S^2,\proj})$ is the projective analytical index in \eqref{eq:analprojind0} and $\eta_{APS}(\D_{S^1})$ denotes the usual eta-invariant of the ordinary Dirac operator on $S^1.$

\begin{corollary} \label{cor0}
	$\eta_{APS}\big(\D^L_{S^2 \times S^1,\proj} \big)=0.$
\end{corollary}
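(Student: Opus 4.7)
The plan is to appeal directly to the sharp-product definition \eqref{eq:etasharp1} and reduce the claim to a statement about the circle.  By definition,
\begin{equation*}
\eta_{APS}\big(\D^L_{S^2 \times S^1,\proj} \big) = \ind(\D^{L,+}_{S^2,\proj})\cdot \eta_{APS}(\D_{S^1}),
\end{equation*}
so it suffices to show that one of the two factors vanishes.  The projective analytic index need not vanish (it is given by the integral in \eqref{eq:projind1} and depends on the chosen local line bundle $L$), so the target factor must be $\eta_{APS}(\D_{S^1})$.

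I would then compute the spectrum of the ordinary Dirac operator $\D_{S^1}=-i\,d/d\theta$ on $S^1$ with the (bounding) spin structure used throughout: the eigenfunctions are $e^{in\theta}$ with eigenvalues $n\in\Z$, each of multiplicity one.  This spectrum is symmetric about the origin, so the spectral zeta function
\begin{equation*}
\eta_{\D_{S^1}}(s) = \sum_{n\ne 0}\frac{\operatorname{sgn}(n)}{|n|^s}
\end{equation*}
vanishes identically, hence $\eta_{APS}(\D_{S^1})=\eta_{\D_{S^1}}(0)=0$.  Substituting this into the sharp-product expression gives the desired $\eta_{APS}\big(\D^L_{S^2 \times S^1,\proj} \big)=0$.

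I do not anticipate any real obstacle here, since the corollary is a direct consequence of the \emph{ad hoc} Definition~\ref{etasharp} combined with the elementary vanishing of the eta-invariant on $S^1$; the only point worth flagging is that we use the \emph{unreduced} invariant $\eta_{APS}$ (not the reduced $\bar\eta$, which would contribute $\tfrac12\dim\ker\D_{S^1}=\tfrac12$), and this is consistent with the convention fixed just before \eqref{eq:dzetaS2}.
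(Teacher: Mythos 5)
Your argument is essentially the paper's own: the corollary follows immediately from the sharp-product convention \eqref{eq:etasharp1} together with the vanishing $\eta_{APS}(\D_{S^1})=0$, which you (correctly) make explicit by observing the spectrum of $-i\,d/d\theta$ is symmetric about the origin. One small terminological slip: the spin structure with spectrum $n\in\Z$ and a one-dimensional kernel (needed in Corollary 2.9 and consistent with Section 3) is the disconnected-cover, \emph{non}-bounding spin structure, not the bounding one as you wrote; this does not affect the conclusion here since both spectra on $S^1$ are symmetric and give $\eta=0$.
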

\begin{proof}
This follows from the fact that $\eta_{APS}(\D_{S^1})=0.$
\end{proof}

On the other hand, due to projectiveness, the kernel of $\D^L_{S^2 \times S^1,\proj}$ is not well-defined.  

\begin{assumption} Take $h(P \# A) := \dim \ker(A).$ 
\end{assumption}

\begin{definition}
	Let  $P,A$ and $R'$ as in Definition  ~\ref{etasharp}. Define the reduced eta-invariant of the projective Dirac operator $R'$ by 
	\begin{equation} \label{eq:redeta0}
	\bar{\eta}_{APS}(R')=\frac{\eta(R') + h(R')}{2} \text{ mod }\Z.
	\end{equation} 
\end{definition}

\begin{corollary}
Let $M_2 =S^1.$ Take  $P=\D^L_{S^2,\proj}$ and $A=\D_{S^1}.$ By Assumption 1,  we have
\begin{equation}
h(\D^L_{S^2 \times S^1,\proj}) = \dim \ker(\D_{S^1}) = 1
\end{equation}
and 
\begin{equation} \label{eq:redeta1}
\bar{\eta}_{APS}(\D^L_{S^2 \times S^1,\proj})=\frac{\eta(\D^L_{S^2 \times S^1,\proj}) + h(\D^L_{S^2 \times S^1,\proj})}{2} \text{ mod }\Z= \frac{1}{2}\text{ mod }\Z.
\end{equation} 
\end{corollary}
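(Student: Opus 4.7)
The plan is to assemble the assertion by plugging the two ingredients of Definition 2.8 (namely $\eta_{APS}$ and $h$) into the formula for $\bar\eta_{APS}$, with both ingredients already available from the preceding material.

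First, I would compute the kernel dimension. By Assumption 1, we have
\begin{equation*}
h(\D^L_{S^2 \times S^1,\proj}) = h\big(\D^L_{S^2,\proj} \# \D_{S^1}\big) = \dim \ker(\D_{S^1}),
\end{equation*}
so the question reduces to an elementary spectral analysis of the ordinary Dirac operator $\D_{S^1} = -i\, d/d\theta$ acting on $L^2(S^1)$. Choosing the trivial (non-bounding) spin structure on $S^1$, a Fourier decomposition $\psi(\theta) = \sum_{n\in\Z} c_n e^{in\theta}$ yields $\D_{S^1}\psi = \sum n\, c_n e^{in\theta}$, so that $\ker \D_{S^1}$ consists precisely of the constant functions and is therefore one-dimensional. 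Hence $h(\D^L_{S^2 \times S^1,\proj}) = 1$.

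Next, I would invoke Corollary~\ref{cor0}, which gives $\eta_{APS}(\D^L_{S^2 \times S^1,\proj}) = 0$ as a consequence of the product formula \eqref{eq:etasharp1} and the symmetry of the spectrum of $\D_{S^1}$ around zero. Substituting both values into Definition 2.8 yields
\begin{equation*}
\bar\eta_{APS}(\D^L_{S^2 \times S^1,\proj}) \;=\; \frac{\eta_{APS}(\D^L_{S^2 \times S^1,\proj}) + h(\D^L_{S^2 \times S^1,\proj})}{2} \;=\; \frac{0 + 1}{2} \;=\; \frac{1}{2} \bmod \Z,
\end{equation*}
which is the claim.

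Strictly speaking there is no deep obstacle here, since both inputs have been prepared in advance; the only judgement call is that the choice of spin structure on $S^1$ which produces the one-dimensional kernel is the one implicit in the sharp product \eqref{eq:sharpprod0} and in the definition of $h$ under Assumption 1. I would flag this explicitly, so that the reader sees that the ``$1/2$'' on the right-hand side of \eqref{eq:redeta1} is an artefact of the harmonic spinors on $S^1$ rather than of the projective factor on $S^2$.
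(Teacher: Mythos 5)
Your proof is correct and coincides with the paper's (implicit) argument, which simply substitutes Assumption~1, Corollary~\ref{cor0}, and Definition~2.8 into one another; the paper attaches no separate proof to this corollary. Your flag about the choice of spin structure on $S^1$ is apt and worth keeping: the eta-invariant of $\D_{S^1}$ vanishes for both spin structures, but the kernel is one-dimensional only for the non-bounding (periodic, trivial-spinor-bundle) one, which the paper does explicitly fix in Section~3 but leaves tacit here.
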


Combining the discussions above, we are now ready to state the result of this section.
\begin{theorem}
Let $X$ be a smooth compact manifold. Let $S^2$ be the Riemannian 2-sphere, together with a smooth map $f: S^2 \to X.$ Let $L$ be the Hermitian local line bundle whose normalised curvature is $B/2 \pi,$ defined by the pullback of a representative $\omega/2\pi$ in $H^2(X,\R/\Z)$ via $f.$ Let $\D^L_{S^2,\proj}$ be the projective Dirac operator twisted by $L$ on $S^2$ and let $\D_{S^1}$ be the usual Dirac operator on $S^1.$ Then, the analytic pairing $ H^2(X,\R/\Z) \times H_2(X) \to \R/\Z$ is given by 
\begin{equation} \label{eq:analpairing2}
\Big\langle \frac{\omega}{2\pi}, [\Sigma \xrightarrow{f} X] \Big\rangle = \bar{\eta}_{DZ}(\D^L_{S^2,\proj})- \int_{S^2} \frac{B}{2\pi} \text{ mod }\Z.
\end{equation}  Moreover, it is non-degenerate and well-defined.
\end{theorem}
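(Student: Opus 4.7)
The theorem has two claims to establish: that the formula on the RHS is well-defined as an element of $\R/\Z$, independent of all auxiliary geometric choices and of the representative $\omega$; and that the resulting pairing $H^2(X,\R/\Z) \times H_2(X) \to \R/\Z$ is non-degenerate.

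For well-definedness, I would check independence under four categories of choices: (i) the multiplicative Hermitian connection on $L$ with curvature $B$; (ii) the Riemannian metric on $S^2$; (iii) the representative $\omega$ within its class $[\omega/(2\pi)] \in H^2(X,\R/\Z)$; and (iv) the thin bordism class of $(S^2, f) \in H_2(X)$. By Corollary 2.8 together with Assumption 1 and Definition 2.7, $\bar{\eta}_{DZ}(\D^L_{S^2,\proj}) \equiv \tfrac{1}{2} \text{ mod } \Z$ independently of connection or metric, disposing of (i) and (ii). For (iii), any two representatives $\omega, \omega'$ differ by $2\pi \sigma$ with $[\sigma] \in H^2(X,\Z)$, so $\int_{S^2} f^*\sigma \in \Z$, which vanishes mod $\Z$. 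For (iv), choose a bordism $W$ with $\partial W = S^2 \sqcup (-S^2)$ and a smooth extension $F: W \to X$; Stokes' theorem combined with closedness of $\omega$ as a de Rham representative yields invariance mod $\Z$.

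For non-degeneracy, I would invoke the reductions at the beginning of this section: a general $\Sigma = \Sigma_{2g}$ reduces to $\Sigma = S^2$ via the degree-one maps $\phi: T^{\# g} \to S^2$. Combined with Lemma 2.2 identifying $H^2(S^2,\R/\Z) \cong \R/\Z$ explicitly via the normalized curvature integral, varying $[\omega/(2\pi)] \in H^2(X,\R/\Z)$ over generators of $H_2(X)$ produces all values of $\int_{S^2} B/(2\pi)$ in $\R/\Z$; the constant additive shift $1/2$ coming from $\bar{\eta}_{DZ}$ does not affect injectivity. Conversely, any non-trivial class in $H_2(X)$ is detected by a dual cohomology class via the Universal Coefficient Theorem, and that class pairs non-trivially through the same formula.

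The main obstacle is the projective nature of $\D^L_{S^2,\proj}$: standard spectral-variation formulas for the eta-invariant (heat-kernel expansions, APS-style boundary analysis) are not directly available, so all perturbative arguments must route through the sharp-product decomposition \eqref{eq:sharpprod0} together with the projective analytical index \eqref{eq:analprojind0} and the ad hoc definitions introduced in this section. A particularly delicate point is that Corollary 2.8 forces $\bar{\eta}_{DZ}$ to be the \emph{constant} $\tfrac{1}{2} \text{ mod } \Z$, so all of the $\omega$- and cycle-dependence of the pairing must be carried by the topological integral $\int_{S^2} B/(2\pi)$; verifying that this constant-shifted formula nonetheless implements the Pontryagin duality requires careful reconciliation with the topological pairing \eqref{eq:topo1} via the projective Atiyah-Singer formula \eqref{eq:projind1}, where the integer correction $\chi(S^2)/2 = 1$ arising from the Todd class is absorbed into the ambient $\Z$-lattice.
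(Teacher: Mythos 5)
Your proposal is sound at the level of rigor the paper itself operates at, but it routes the two claims differently from the paper. For non-degeneracy you and the paper do essentially the same thing: the eta term is the constant $\tfrac12 \bmod \Z$ forced by Definition 2.5, Assumption 1 and Corollary 2.9, so all dependence sits in the topological integral, and one argues on generators of $H^2(S^2,\R/\Z)$ and $H_2(S^2)$ that the resulting value $\tfrac12-k$ is not identically zero; your extra remarks (varying $\omega$ realises every value of $\int_{S^2}B/2\pi$, detection of homology classes via the UCT) only elaborate the paper's ``sending generator to generator'' sentence. The genuine divergence is well-definedness: the paper does \emph{not} prove it here at all, but defers it to a forthcoming companion paper on the torsion-twisted $\R/\Z$ $K^0$-pairing (\cite{lim1}), whereas you verify it directly through (i)--(iv). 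Your route is more self-contained and is essentially trivial precisely because $\bar\eta_{DZ}$ is a constant in this framework, while the paper's route buys consistency with the general $K$-theoretic machinery of Sections 4--6. Two small repairs to your sketch: in (i) the constancy of the eta term does not by itself dispose of the connection dependence, since the topological term involves $B$ itself --- you should add that a change of multiplicative connection alters $B$ by an exact form, leaving $\int_{S^2}B/2\pi$ unchanged; and your closing remark about reconciling the formula with \eqref{eq:projind1} is unnecessary, because in \eqref{eq:etasharp1} the projective index is multiplied by $\eta_{APS}(\D_{S^1})=0$, so the Todd correction $\chi(S^2)/2$ never enters the final pairing.
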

\begin{proof}
From \eqref{eq:dzetaS2}, we consider the reduced Dai-Zhang eta invariant $\bar{\eta}_{DZ}(\D^L_{S^2,\proj})$ as the invariant $\bar{\eta}_{APS}(\D^L_{S^2 \times S^1,\proj})$  defined by Corollary \ref{cor0},  Assumption 1, and   \eqref{eq:redeta1}.	 Its justification has been given above, which follows from the extension to the cylinder and trivial gluing at both ends. The topological part (the second term) is the (modulo $\Z$) holonomy of the (normalised) curvature 2-form associated to the representative $L$ over $S^2.$ To show non-degeneracy, it suffices to show that 
\begin{equation}
H^2(S^2,\R/\Z) \to  \text{Hom}(H_2(S^2,\Z),\R/\Z) 
\end{equation} implemented  by the formula \eqref{eq:analpairing2} is an isomorphism. Notice that we are actually working on generators on both groups, i.e. the generator $L$ in $H^2(S^2,\R/\Z)$ and  the fundamental class $[S^2]$ in $H_2(S^2).$  So, the injectivity is implied. For surjectivity, it suffices to show that the map is non-zero, and thus sending generator to generator in $\R/\Z.$  Let $k \in \R$ be the integration of the topological term. Together with \eqref{eq:redeta1}, the pairing \eqref{eq:analpairing2}  reduces to  $1/2 - k$ modulo $\Z,$ which is non-zero in general. The isomorphism implies that the analytic pairing is non-degenerate.  The well-definedness follows as a special case of the analytic pairing in $\R/\Z$ $K^0$-theory with torsion twists, which will be proven elsewhere \cite{lim1}.   
\end{proof}


				\section{Analytic duality pairing $H^1(X,\R/\Z) \times H_1(X,\Z)$ }

		In this section, we study the analytic Pontryagin duality pairing in the cohomology of degree one, which is another phase calculation of the Aharonov-Bohm effect in Quantum Mechanics, cf. \cite{feynman1979feynman}.  Let $X$ be a smooth compact manifold.  By Fact 1, the group $H_1(X)$ is identified with the first oriented bordism group $\Omega^{or}_1(X),$ whose element is given by $[S^1 \xrightarrow{\gamma} X].$ 
		Then, the (classical) topological pairing  
		\begin{equation}
		H^1(X,\R/\Z) \times H_1(X) \to \R/\Z
		\end{equation} given  by 
		\begin{equation} \label{eq:H1topo0}
		\big(  A ,[S^1 \xrightarrow{\gamma} X] \big) \mapsto  \int_{S^1} \gamma^*A \text{ mod }\Z 
		\end{equation} is the holonomy of a (pullback) flat connection $A$ over a closed curve. 
		Apart from the classical pairing, there is also an analytic aspect.
	 	Let $\D_{S^1}=-id/d\theta$ be the usual Dirac operator on $S^1,$ with respect to the disconnected-cover spin structure, given by 
	 	\begin{equation}
	 	\tau =S^1 \times \C = \R \times \C / \sim,
	 	\end{equation}  where $(t,z) \sim (t',z')$ if and only if $t-t' \in \Z, z=z'.$ In other words, this is the `bad' spin structure of $S^1$ that does not extend to the disc $\mathbb{D}.$ 
		The group $H^1(X,\R/\Z)$ is usually interpreted as the set that classifies all of the isomorphic flat complex line bundles with connections over $X$ whose first Chern classes are torsion in $H^2(X,\Z).$  The pullback, via $\gamma,$ defines a flat complex line bundle over $S^1,$ which is necessarily trivial by a torsionality argument. More precisely, let 
		\begin{equation}\label{eq:flatlinebund0}
		L_\rho = \widetilde{X} \times_\rho U(1) 
		\end{equation} be the associated line bundle defined by a unitary representation $\rho : \pi_1(X) \to U(1).$ This bundle is flat and has the first Chern class  $c_1(L_\alpha) \in H^2_{tors}(X,\Z).$ Via $\gamma : S^1 \to X,$ we obtain the unitary representation $\rho'$ through the composition $$\rho' = \rho \circ \gamma_* : \pi_1(S^1) \to U(1),$$ which defines the flat line bundle 
		\begin{equation} \label{eq:flatlinebund1}
		\tilde{L}:= L_{\rho'} = \R \times_{\rho'} U(1)
		\end{equation} over $S^1.$ 
		A section of $\tilde{L}$ takes the form $f(\theta)v_{\rho'}(\theta),$ where  $f$ is a function on $S^1$ and $\nu_{\rho'}$ is a generating section given by 
		\begin{equation}
		\nu_{\rho'} (\theta) = \exp (2\pi i a \theta), \;\; a \in (0,1).
		\end{equation}
		Let $\D^{\tilde{L}}_{S^1}$ be the twisted-by-$\tilde{L}$ Dirac operator on $S^1.$ It is an ordinary self-adjoint elliptic differential operator. According to \cite{atiyah1975spectral2,gilkey2018invariance}, its eigenvalues are $\lambda_n=n+a,$ where $n$ is an integer obtained by differentiating $f.$ 
		Then, its Atiyah-Patodi-Singer eta-invariant is
		\begin{equation}
		\eta_{APS}(\D^{ \tilde{L}}_{S^1})=1-2a.
		\end{equation} which is non-zero in general, yielding the non-triviality of the eta-invariant.  Moreover, since $\dim \ker(\D^{ \tilde{L}}_{S^1})=1,$ the reduced eta-invariant is  
		\begin{equation}\label{eq:redeta2}
		\bar{\eta}_{APS}(\D^{ \tilde{L}}_{S^1}) = \frac{\eta_{APS}(\D^{ \tilde{L}}_{S^1}) + \dim \ker(\D^{ \tilde{L}}_{S^1})}{2}  = 1-a \text{ mod }\Z
		\end{equation}  which is again non-vanishing. 
		
		\begin{theorem}
		Let $X$ be a smooth compact manifold and let $\gamma : S^1 \to X$ be a loop. Let $\tilde{L}$ be the associated flat line bundle over $S^1$ defined by \eqref{eq:flatlinebund1} via $\gamma$. Let $\D^{\tilde{L}}_{S^1}$ be the corresponding twisted Dirac operator. Then, the analytic  pairing $H^1(X,\R/\Z) \times H_1(X) \to \R/\Z$ is given by 
		\begin{equation}
		\langle A ,[S^1 \xrightarrow{\gamma} X] \rangle=\bar{\eta}_{APS} (\D^{\tilde{L}}_{S^1}) - \int_{S^1} \gamma^*A \text{ mod } \Z.
		\end{equation} This pairing is well-defined and non-degenerate.
		\end{theorem}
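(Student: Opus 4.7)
The plan is to prove the theorem in two stages: (1) well-definedness, i.e.\ that the RHS descends from the representative $(A,\gamma)$ to the classes $([A],[\gamma]) \in H^1(X,\R/\Z) \times H_1(X,\Z)$; and (2) non-degeneracy of the induced map. For the first stage, I would split the invariance into independence in each argument. Independence in $A$: a modification $A \mapsto A + d\phi$ with $\phi : X \to \R/\Z$ changes the holonomy integral $\int_{S^1}\gamma^*A$ by $\int_{S^1} d(\gamma^*\phi) \in \Z$ (Stokes on the closed curve $S^1$), and leaves the isomorphism class of the flat bundle $L_\rho$ in \eqref{eq:flatlinebund0} unchanged; hence it leaves the twisted Dirac operator and its reduced eta unchanged, so the full RHS is invariant modulo $\Z$.

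Independence in $\gamma$ within its bordism class is the heart of the matter. By Fact~1, any two representatives $\gamma_0,\gamma_1 : S^1 \to X$ of the same class in $H_1(X,\Z)$ can be connected by a smooth map $F : \Sigma \to X$ from an oriented compact surface $\Sigma$ with $\partial\Sigma = S^1 \sqcup (-S^1)$. I would pull back $L_\rho$ to a flat line bundle $F^*L_\rho$ on $\Sigma$ and apply the Atiyah-Patodi-Singer index theorem to the twisted Dirac operator $\D^{F^*L_\rho}_{\Sigma}$ with APS boundary conditions. Because $F^*L_\rho$ carries a flat connection, its Chern character form collapses to its degree-zero component, while $\hat{A}(T\Sigma)$ contributes only $1$ in degree zero on a surface, so the bulk integrand reduces to $\int_\Sigma F^*(dA/2\pi) = 0$ by closedness of $A$ and Stokes. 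The APS formula then reads
\begin{equation*}
\ind(\D^{F^*L_\rho}_{\Sigma,APS}) = \bar{\eta}_{APS}(\D^{\tilde{L}_1}_{S^1}) - \bar{\eta}_{APS}(\D^{\tilde{L}_0}_{S^1}) \pmod{\Z},
\end{equation*}
and combining with Stokes for the topological term $\int_{\gamma_1}\gamma_1^*A - \int_{\gamma_0}\gamma_0^*A = \int_\Sigma F^*dA = 0$ shows that the two values of the pairing differ by an integer.

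For non-degeneracy, the plan is to reduce via the universal coefficient theorem. The short exact sequence
\begin{equation*}
0 \to \text{Ext}(H_0(X,\Z),\R/\Z) \to H^1(X,\R/\Z) \to \Hom(H_1(X,\Z),\R/\Z) \to 0
\end{equation*}
collapses to an isomorphism $H^1(X,\R/\Z) \xrightarrow{\sim} \Hom(H_1(X,\Z),\R/\Z)$ since $H_0(X,\Z)$ is free abelian. Naturality of both the analytic pairing and this isomorphism under pullbacks reduces the verification to a generator, which I would test on $X = S^1$ with $\gamma = \id_{S^1}$, using $H_1(S^1) = \Z$ and $H^1(S^1,\R/\Z) = \R/\Z$. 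For $A$ corresponding to holonomy $e^{2\pi i a}$, the explicit formula $\bar{\eta}_{APS}(\D^{\tilde{L}}_{S^1}) = 1-a \bmod \Z$ combined with $\int_{S^1}A = a \bmod \Z$ yields a non-trivial function of $a$, which generates $\R/\Z$ and therefore implements the UCT isomorphism.

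The main obstacle I anticipate is the bordism-invariance step: one must verify that the APS index theorem applies cleanly to $\D^{F^*L_\rho}_\Sigma$ (in particular that the pulled-back flat connection extends across $\Sigma$ smoothly and restricts to the chosen data on each boundary circle) and that the combined boundary-plus-holonomy contribution is exactly the difference $\bar{\eta}_1 - \bar{\eta}_0 - (\int_{\gamma_1}A - \int_{\gamma_0}A)$ modulo $\Z$. Handling orientation signs on the two boundary components carefully, and confirming that the vanishing of the Chern character form (not merely its cohomology class) is enough to kill the bulk term, are the technical points that require attention.
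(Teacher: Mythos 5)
Your well-definedness argument (gauge invariance in $A$ plus APS on a bounding surface) is a plausible direct route, but the non-degeneracy step contains a genuine gap. Reducing to $X=S^1$, $\gamma=\id$ is legitimate, since the formula is defined by pullback to $S^1$; however, the concluding inference ``a non-trivial function of $a$ \ldots therefore implements the UCT isomorphism'' is invalid: a non-trivial homomorphism $\R/\Z\to\Hom(\Z,\R/\Z)\cong\R/\Z$ need not be injective. Worse, with your own reading of the data --- $A$ a connection whose holonomy is $e^{2\pi i a}$, so $\int_{S^1}\gamma^*A\equiv a$, and $\bar{\eta}_{APS}(\D^{\tilde L}_{S^1})=1-a$ from \eqref{eq:redeta2} --- the value on the generator is $1-2a\equiv -2a \bmod \Z$, i.e.\ multiplication by $-2$, whose kernel contains $a=1/2$; any $2$-torsion class $\rho:\pi_1(X)\to\{\pm1\}$ would then pair trivially with every cycle, so your argument does not establish non-degeneracy and, taken literally, would contradict it. The paper avoids this by construing the $H^1$ datum as a Lott-type triple $(\tilde L,\nabla^{\tilde L},\omega)$ with $d\omega=c_1(\tilde L,\nabla^{\tilde L})=0$, in which the $1$-form ($A$) is data independent of the holonomy representation and only compensates for changes of representative; well-definedness and non-degeneracy are then quoted as a special case of the analytic $K^1(X,\R/\Z)$ pairing of \cite[Proposition 3]{lott1994r}, where it is the \emph{relative} reduced eta that realizes $\Hom(H_1(X),\R/\Z)$. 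To repair your proof you would either adopt that formulation, or prove directly (via the APS index theorem for flat bundles) that the analytic pairing agrees with the topological pairing \eqref{eq:H1topo0} modulo $\Z$, rather than inferring an isomorphism from mere non-vanishing.

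Two further points in the bordism step need attention, beyond those you flagged. First, Fact 1 only supplies an \emph{oriented} bordism $F:\Sigma\to X$; the operator in the theorem is built from the disconnected-cover (non-bounding) spin structure on $S^1$, and the eta-invariant is sensitive to this choice, so you must upgrade $\Sigma$ to a spin bordism whose induced boundary spin structures are the chosen ones (possible here because both boundary circles carry the same structure, so the $\Z/2$ obstruction vanishes, but it requires an argument). Second, the bulk term vanishes simply because the Chern--Weil form of the flat connection on $F^*L_\rho$ is identically zero; the expression $\int_\Sigma F^*(dA/2\pi)$ is not meaningful when $L_\rho$ is non-trivial, since $A$ is not then a global form. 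Finally, note that full well-definedness also requires invariance under the equivalence relating the pair $(\tilde L,\nabla^{\tilde L})$ and the form $A$ simultaneously (the Chern--Simons/exactness relation in Lott's model), which gauge invariance $A\mapsto A+d\phi$ alone does not cover; this is precisely the part the paper's proof delegates to the general $K^1$ case, checking explicitly only, via Stokes, that the topological term is unchanged when $\omega$ is replaced by a cohomologous $1$-form.
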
   
		
		\begin{proof}
		The validity and non-triviality of the analytic part of \eqref{eq:redeta2} are discussed above. The topological term is the (reduced modulo $\Z$) holonomy of a flat connection $A$ over a closed curve. This pairing formula is a special case of the analytic pairing in $\R/\Z$ $K^1$-theory, cf. \cite[Proposition 3]{lott1994r}. In particular, the well-definedness and the non-degeneracy follow from the general case. For instance, the pullback data defines a triple $(\tilde{L},\nabla^{\tilde{L}},\omega)$ over $S^1,$ where $\omega$ is a 1-form satisfying  
		\begin{equation}
		d\omega=c_1(\tilde{L},\nabla^{\tilde{L}})=0.
		\end{equation} From a standard calculation of the curvature $F_{\nabla^{\tilde{L}}}=\nabla^{\tilde{L}}\circ \nabla^{\tilde{L}}=dA',$ where $A'=\gamma^*A,$  we see that  $\omega$ is cohomologous to $A'.$ By Stokes theorem, the topological integration is independent of the choice of 1-form.  The others are routine work and  are left to the reader.
		\end{proof}
		


				\section{The even $\R/\Z$ $K$-theory  }
	
				In this section, we give a model of the even $K$-theory with coefficients in $\R/\Z.$ We show that the proposed geometric model is a $K^0(X)$-module and has a well-defined  $\R/\Q$ Chern character map. In \cite{basu2005k}, Basu gave a model of this group by considering the suspension of the $\R/\Z$ $K^1$-theory, whose cocycle is a pair of vector bundles $(E_1,E_2)$ over the suspension $SX,$ together with an isomorphism $\phi: E_1 \otimes V \cong E_2 \otimes V,$ where $V$ is a von-Neumann algebra bundle.  However, this is not an appropriate model in formulating the analytic $K^0$-pairing. The main reason is that differential forms are used in a fundamental way, but the suspension $SX$ may not be smooth even if $X$ is smooth. Moreover, the model given below is compatible with the construction of the Dai-Zhang eta-invariant, which is the analytic term \eqref{eq:K0pairing} of the analytic $K^0$-pairing and thus justifying the claim that it is a valid and a direct analog to the Lott's analytic $K^1$-pairing \cite{lott1994r}.

\subsection{The group $K^0(X,\R/\Z)$}

\begin{definition}Let $X$ be a smooth compact manifold. An $\R/\Z$ $K^0$-cocycle over $X$ is a triple 
	\begin{equation}\label{eq:rzK0cocycle}
	(g,(d,g^{-1}dg),\mu)
	\end{equation}
	 where 
	\begin{itemize}
		\item $g: X \to U(N)$ is a smooth map, i.e. a $K^1(X)$-representative,
		\item $(d,g^{-1}dg)$ is a pair of flat connections on the trivial bundle defined by $g,$
		\item $\mu  \in  \Omega^{\text{even}}(X) /d\Omega$ satisfying the \textit{exactness} condition
		\begin{equation}\label{eq:quantization1}
		d\mu=ch(g,d)-\text{Tr}(g^{-1}dg).
		\end{equation}
	\end{itemize} 
\end{definition} 

Here, the odd Chern character of $g$ with flat connections $(d,g^{-1}dg)$ is explicitly given by 
\begin{equation} \label{eq:oddch0}
ch(g,d):=\sum^{\infty}_{n=0} \frac{n!}{(2n+1)!} \text{Tr}(g^{-1}dg)^{2n+1}
\end{equation} cf. \cite{getzler1993odd} and \cite{zhang2001lectures}.

\begin{definition} 
	Let $g_i : X \to U(N_i),$ for $i=1,2,3,$ be smooth maps for large $N_i \in \Z$. Let $\cE_i$ be the $\R/\Z$ $K^0$-cocycles corresponding to $g_i.$ Then, the $\R/\Z$ $K^0$-relation is given by 
	\begin{equation} \label{eq:k0relation0}
	\cE_2=\cE_1 +\cE_3,
	\end{equation} i.e. whenever there is a sequence of maps $g_1 \longrightarrow g_2 \longrightarrow g_3$ such that 
	\begin{equation}
	g_2 \simeq g_1 \oplus g_3,
	\end{equation} which can be viewed as $g_2$ being homotopic to $\text{Diag}(g_1,g_3)$ as unitary matrices, then  
	\begin{equation}\label{eq:k0rel}
	\mu_2=\mu_1+\mu_3 -\text{T}ch(g_1,g_2,g_3).
	\end{equation}
	Here, $\text{T}ch(g_1,g_2,g_3)$ denotes the transgression form of the odd Chern character satisfying  
	\begin{equation}
		d\text{T}ch(g_1,g_2,g_3)= ch(g_1)-ch(g_2)+ch(g_3).
	\end{equation}
\end{definition}

\begin{remark}
	The transgression  $\text{T}ch(g_1,g_2,g_3)$ is taken as $\text{T}ch((i\oplus j)^*g_2, g_1 \oplus g_3)$  where $i :g_1 \to g_2$ is the inclusion and $j:g_3 \to g_2$ is a splitting map.  The term with two entries  is the transgression form of the odd Chern character defined by 
	\begin{equation}\label{eq:oddtransg1}
	\text{T}ch(g_t,d)=\sum^{\infty}_{n=0} \frac{n!}{(2n)!} \int^1_0 \tr \Big( g^{-1}_t \frac{\partial g_t}{\partial t}(g^{-1}dg_t)^{2n} \Big) dt
	\end{equation} where $g_t$ is a path of smooth maps joining $(i\oplus j)^*g_2$ and $g_1 \oplus g_3,$  for $0<t<1.$  One can show that   $\text{T}ch((i\oplus j)^*g_2, g_1 \oplus g_3)$ is independent of the choice of  $j.$
\end{remark}

\begin{definition}
	Let $X$ be a smooth compact manifold. The $\R/\Z$ $K^0$-theory of $X,$ denoted by $K^0(X,\R/\Z),$ consists of all $\R/\Z$ $K^0$-cocycles with zero virtual trace in the lowest degree, modulo the $\R/\Z$ $K^0$-relation. The group operation is given by the addition of $\R/\Z$ $K^0$-cocycles
	\begin{equation}(g,(d,g^{-1}dg),\mu)+(h,(d,h^{-1}dh),\theta)=(g\oplus h,(d\oplus d,g^{-1}dg \oplus h^{-1}dh),\mu \oplus \theta).
	\end{equation}
\end{definition}

\begin{remark} \label{altK1}
	There is another equivalent definition of $K^1(X),$ in which a class is represented by a pair $(E,h)$ where $E$ is a complex vector bundle over $X$ and $h$ is a smooth automorphism of $E.$ One way to see the equivalence between these two definitions of $K^1(X)$ is by first complementing $E$ to a trivial bundle $\tau$ by a complementary bundle $E^c,$ which  always exists. Let $T$ be an isomorphism $E \oplus E^c \cong \tau.$ Let $\tilde{g}:= T^{-1}(h \oplus \text{Id}_{E^c})T$ be an automorphism of $\tau.$  Then, $\tilde{g}$ and $h$ define the same class in $K^1(X).$  
	
	Note that the second entry of \eqref{eq:rzK0cocycle} is uniquely determined by $g.$ However, if another definition of $K^1(X)$ is used, then a choice of a pair of connections comes into the picture. In particular, the cocycle  $(g,(d,g^{-1}dg),\mu)$ can be equivalently modified to $(h, (\nabla^E, h^{-1}\circ \nabla^E\circ h),\mu)$ for a pair $(E,h)$ where $E$ is a complex vector bundle with connection $\nabla^E,$ $h$ is an automorphism of $E,$ $(\nabla^E,h^{-1}\nabla^E h)$ is a pair of connections on $E$ and $\mu$ is an even degree form on $X$ satisfying the exactness condition. The relation is similar: whenever there is a SES of maps $0\to h_1 \to h_2 \to h_3 \to 0,$  the relation is given by $\xi_2=\xi_1 + \xi_3.$  
	The corresponding odd Chern character of $(E,h)$ is defined by  
	\begin{equation}
	ch(h):=CS(\nabla^E,h^{-1}\circ \nabla^E \circ h).
	\end{equation}
	 Its explicit formula is now in the general setting and becomes more complicated, see \cite{fedosov1996index}. 
\end{remark}

\textbf{$K^0(X)$-module structure:}   We show that the group $K^0(X,\R/\Z)$ is a $K^0(X)$-module. For clarity, we use the second definition of $K^1(X)$ as in Remark ~\ref{altK1}.

Let $(E,g)$ be an $K^1$-representative. The module multiplication
$$ K^0(X) \times K^0(X,\R/\Z) \longrightarrow K^0(X,\R/\Z)$$ is given by
\begin{equation} \label{eq:K0module}
V \hat{\otimes} (g,(\nabla^E,g^{-1}\nabla^E g),\mu) = \big(V \otimes E,(\nabla^V \otimes \nabla^E, h^{-1}\nabla^V h \otimes g^{-1}\nabla^E g), ch(\nabla^V)\wedge \mu \big)
\end{equation} where $h$ is a chosen automorphism of $V.$

The tensor product \eqref{eq:K0module} requires a choice of automorphism $h$ of $V,$ which   
always exists from the viewpoint of the complementary bundle and the automorphism of the trivial bundle as a global trivialisation. Here, $\nabla^V \otimes \nabla^E := \nabla^V \otimes 1 + 1 \otimes \nabla^E.$ 
Fix $g,$ consider two choices $h_1$ and $h_2$ so that
\begin{equation} \label{eq:chtensor0}
ch(h_1 \otimes g ) = CS(\nabla^V \otimes \nabla^E, h^{-1}_1 \nabla^V h_1 \otimes g^{-1} \nabla^E g) 
\end{equation}
\begin{equation} \label{eq:chtensor1}
ch(h_2 \otimes g ) = CS(\nabla^V \otimes \nabla^E, h^{-1}_2 \nabla^V h_2 \otimes g^{-1} \nabla^E g).
\end{equation}
By taking the difference \eqref{eq:chtensor0} -- \eqref{eq:chtensor1}, we get 
\begin{align*}
ch(h_1 \otimes g ) &- ch(h_2 \otimes g )  \\
&=ch(g^{-1}\nabla^E g ) \wedge \big( CS(\nabla^V ,h^{-1}_1 \nabla^V h_1) - CS(\nabla^V ,h^{-1}_2 \nabla^V h_2)\big) \\
&= ch(g^{-1}\nabla^E g ) \wedge CS(h^{-1}_2 \nabla^V h_2, h^{-1}_1 \nabla^V h_1)
\end{align*} 
If $h_1$ and $h_2$ represent the same class, then $h_2h^{-1}_1$ is homotopic to the identity. The Chern-Simons form reduces to  $CS(\nabla^V , \nabla^V).$ For $t\in [0,1],$ let $\gamma(t)$ be a path of connections joining $\nabla^V$ back to itself, which is a closed curve. Let $A_t \in \Omega^1(X,\text{End}(V))$ and $R_t$ be the curvature of $\nabla^V_t.$ Consider $$cs(\gamma)=\int_0^1 \sum_{j=1}\frac{1}{(j-1)!} \Big( \frac{1}{2\pi i}\Big)^j \text{Tr}(A_t \wedge (R_t)^{j-1}).$$ By \cite[\text{Proposition} 1.6]{simons2008structured}, the odd form $cs(\gamma)$ is exact since $\gamma$ is a closed curve. Together with \cite[(1.7)]{simons2008structured}, we have $$CS(\nabla^V,\nabla^V)=cs(\gamma) \text{ mod exact} \equiv 0. $$ The above argument shows the following corollary. 

\begin{corollary}
	For a fixed $K^0$-cocycle in $K^0(X,\R/\Z)$, the module multiplication given by \eqref{eq:K0module} only depends on the homotopy class of $h.$
\end{corollary}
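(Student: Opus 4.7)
The plan is to show that homotopic automorphisms $h_1 \simeq h_2$ of $V$ yield the same class in $K^0(X,\R/\Z)$ when tensored, via \eqref{eq:K0module}, with a fixed cocycle $(g,(\nabla^E,g^{-1}\nabla^E g),\mu)$. The bundle, the pair of connections, and the form $\mu$ enter the output symmetrically in $h$, so the only place the choice of $h$ can contribute is the even differential-form slot, which is controlled by the odd Chern character $ch(h\otimes g)$ via \eqref{eq:chtensor0}--\eqref{eq:chtensor1}. I would first invoke the splitting of the Chern--Simons transgression over a tensor product to express the difference $ch(h_1\otimes g)-ch(h_2\otimes g)$, modulo exact forms, as $ch(g^{-1}\nabla^E g) \wedge CS(h_2^{-1}\nabla^V h_2,\, h_1^{-1}\nabla^V h_1)$, since the cross term $CS(\nabla^E,g^{-1}\nabla^E g)\wedge ch(\nabla^V)$ is common to both sides and cancels.

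The key step is then to argue that the remaining Chern--Simons form is exact whenever $h_1$ is homotopic to $h_2$. A homotopy from $h_2 h_1^{-1}$ to $\mathrm{Id}$ induces a path of connections joining $h_1^{-1}\nabla^V h_1$ and $h_2^{-1}\nabla^V h_2$ inside the gauge orbit of $\nabla^V$; concatenating with the straight-line segment back produces a closed loop $\gamma(t)$ in the space of connections on $V$ based at $\nabla^V$. By \cite[Proposition 1.6]{simons2008structured} the associated loop form $cs(\gamma)=\int_0^1 \sum_j \frac{1}{(j-1)!}\bigl(\frac{1}{2\pi i}\bigr)^j \text{Tr}(A_t\wedge R_t^{j-1})\, dt$ is exact, and by \cite[(1.7)]{simons2008structured} it equals $CS(\nabla^V,\nabla^V)$ modulo exact forms. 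Wedging with $ch(g^{-1}\nabla^E g)$ preserves exactness, so the full discrepancy lies in $d\Omega^{\text{odd}}(X)$ and the two triples represent the same class in $K^0(X,\R/\Z)$.

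The main obstacle I foresee is the transgression bookkeeping: one must check that the tensor-product splitting of $CS$ really produces a clean wedge with $ch(g^{-1}\nabla^E g)$ modulo exact forms, so that no residual cross terms obstruct the reduction to a one-parameter family of $V$-connections alone. Once this factorisation is in hand, the appeal to the Simons--Sullivan lemma is immediate, and the rest of the argument is exactly the computation already displayed in the excerpt preceding the corollary.
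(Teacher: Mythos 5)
Your proposal is correct and follows essentially the same route as the paper: you isolate the discrepancy in the differential-form slot, factor $ch(h_1\otimes g)-ch(h_2\otimes g)$ as $ch(g^{-1}\nabla^E g)\wedge CS(h_2^{-1}\nabla^V h_2,\,h_1^{-1}\nabla^V h_1)$, reduce to $CS(\nabla^V,\nabla^V)$ via the homotopy $h_2h_1^{-1}\simeq \mathrm{Id}$, and kill it with the Simons--Sullivan exactness of $cs(\gamma)$ for a closed loop of connections, exactly as in the paper's argument preceding the corollary. Since $\mu$ is taken in $\Omega^{\text{even}}(X)/d\Omega$, working modulo exact forms suffices, so no further justification is needed.
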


Moreover, since $ch(\nabla^V)$ is closed, it is straightforward that  
\begin{equation}
d(ch(\nabla^V) \wedge \mu)=ch(\nabla^V) \wedge d\mu.
\end{equation} 

 \begin{remark} There is also a description using $\Z_2$-graded cocycles.
 A $\Z_2$-graded $K^0$-cocycle consists of $(g^{\pm}, (d, g^{\pm}dg^{\pm}),\mu)$ where $g^{\pm}=g^+ \oplus g^-$ is a $\Z_2$-graded $K^1$ element and $\mu \in \Omega^{\text{even}}(X)/d\Omega$ such that 
 \begin{equation}
 d\mu=ch(g^{\pm},d)=ch(g^+,d) - ch(g^-,d).
 \end{equation}    
 \end{remark} 

\textbf{Explicit maps and exactness of (part of) sequence :} Consider the  sequence
\begin{equation} \label{eq:les1}
\cdots \to K^0(X,\R) \xrightarrow{\alpha} K^0(X,\R/\Z) \xrightarrow{\beta} K^1(X,\Z) \xrightarrow{ch} K^1(X,\R) \to \cdots 
\end{equation}
associated to the short exact sequence of coefficients $1 \to \Z \to \R \to \R/\Z  \to 1,$ where  
\begin{align}
& \alpha(\mu)=(\id, (d,d),\mu) - (\id,(d,d),0)=(0,0,\mu) \text{  is the inclusion map,}   \label{eq:alpha0}\\
& \beta(g,(d,d+g^{-1}dg),\theta )=[g] \text{  is the forgetful map,} \label{eq:beta0} \\
& ch(g) \text{  is the odd Chern character map given by } \eqref{eq:oddch0}.
\end{align}

\begin{lemma}\label{exactK0}
	With respect to the sequence \eqref{eq:les1}, it is exact at $K^0(X,\R/\Z)$ and at $K^1(X,\Z).$
\end{lemma}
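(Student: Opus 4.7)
I will handle the two inclusions separately.  The containment $\operatorname{im}(\alpha)\subseteq\ker(\beta)$ is immediate from \eqref{eq:alpha0} and \eqref{eq:beta0}: one computes $\beta(\alpha(\mu))=\beta(0,0,\mu)=[0]=0$.  For the reverse direction, take a cocycle $\cE=(g,(d,g^{-1}dg),\mu)$ with $[g]=0$ in $K^1(X)$; stability of $K^1$ produces a smooth homotopy $g_t$ from $g$ to $\id_N$ in $U(N)$ for $N$ sufficiently large.  I would then apply the $\R/\Z$ $K^0$-relation of Definition 4.2 to this homotopy (viewing $g_t$ as a realisation of the trivial splitting $g\simeq g\oplus\id$), converting $\cE$ into the equivalent cocycle $(\id,(d,d),\nu)$, with $\nu=\mu+\mathrm{T}\ch(g_t)$.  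The cocycle condition on $\mu$ combined with $d\,\mathrm{T}\ch(g_t)=\ch(\id)-\ch(g)$ shows that $\nu$ is closed, so $(\id,(d,d),\nu)=\alpha(\nu)\in\operatorname{im}(\alpha)$.

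\textbf{Exactness at $K^1(X,\Z)$.}  If $[g]\in\operatorname{im}(\beta)$, lifted by $(g,(d,g^{-1}dg),\mu)$, then condition \eqref{eq:quantization1} presents $\ch(g,d)-\tr(g^{-1}dg)$ as an exact odd form.  The zero-virtual-trace hypothesis in the lowest degree forces the class $[\tr(g^{-1}dg)]\in H^1(X,\R)$ to vanish---this degree-one piece is precisely the first component of the odd Chern character, tied to $\det g$---whence $\ch(g,d)$ itself is exact and $\ch([g])=0$ in $K^1(X,\R)$.  Conversely, if $\ch([g])=0$, then $\ch(g,d)$ is exact in $\Omega^{\mathrm{odd}}(X)$; combining this with the vanishing of $[\tr(g^{-1}dg)]$ supplied by the virtual-trace normalisation, I obtain a primitive $\mu\in\Omega^{\mathrm{even}}(X)/d\Omega$ satisfying $d\mu=\ch(g,d)-\tr(g^{-1}dg)$, and the triple $(g,(d,g^{-1}dg),\mu)$ becomes a legitimate $\R/\Z$ $K^0$-cocycle with $\beta$-image $[g]$.

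\textbf{Main obstacle.}  The delicate step is verifying that the transgression correction $\mathrm{T}\ch(g_t)$ in the first half is independent, modulo the relation \eqref{eq:k0rel}, of the choice of homotopy $g_t$.  This requires a transgression-of-transgression argument: given two such homotopies $g_t$ and $g_t'$, one must exhibit the two-parameter secondary transgression form as exact modulo the $K^0$-relation, so that $\alpha(\nu)$ is a well-defined class in $K^0(X,\R/\Z)$ and not merely a well-defined cocycle.  The second half of the argument is comparatively routine once the virtual-trace condition is interpreted as the vanishing of the degree-one Chern class $[\tr(g^{-1}dg)]\in H^1(X,\R)$; without that interpretation, the $\tr(g^{-1}dg)$ term obstructs the desired equivalence between exactness of $\ch(g,d)$ and the liftability of $[g]$ under $\beta$.
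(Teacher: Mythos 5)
Your argument at $K^0(X,\R/\Z)$ follows the same strategy as the paper — exploit the vanishing of $[g]$ in $K^1(X)$ to homotope a stabilised $g$ to the identity — but you make the transgression step explicit via the $\R/\Z$ $K^0$-relation, whereas the paper simply absorbs $g_1\oplus\id$ and $g_2\oplus\id$ into a common matrix $h$ and reads off the difference as $(0,0,\mu_1-\mu_2)$. The gap in your version is the claim that $\nu=\mu+\mathrm{T}\ch(g_t)$ is closed. Tracking the right-hand side of \eqref{eq:quantization1} together with $d\,\mathrm{T}\ch(g_t)=\ch(\id,d)-\ch(g,d)$ gives $d\nu=-\tr(g^{-1}dg)$, which is not zero. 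It \emph{is} exact — since $g$ is null-homotopic, $\det g$ has a global logarithm and $\tr(g^{-1}dg)$ is a derivative — so $\nu$ can be corrected by a degree-zero primitive to become closed, but that correction must actually be made: $(\id,(d,d),\nu)$ is not a legitimate $\R/\Z$ $K^0$-cocycle unless $d\nu$ vanishes on the nose. Your "main obstacle" paragraph is right that choice-dependence needs attention, but the $\tr(g^{-1}dg)$ snag is the more immediate problem.

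At $K^1(X,\Z)$ you depart from the paper more substantially. The paper uses the equivalence "vanishing Chern character $\Leftrightarrow$ torsion" to reduce to $K^0(X,\R/\Z)\xrightarrow{\beta}K^1_{\mathrm{tors}}(X)\to 0$ and, for $\im\beta\subseteq\ker(\ch)$, computes $\ch([g_1]-[g_2])=[d(\mu_1-\mu_2)]=0$, silently dropping the $\tr(g^{-1}dg)$ contributions from \eqref{eq:quantization1}. You instead isolate precisely that dropped term and try to kill it by reading "zero virtual trace in the lowest degree" as forcing $[\tr(g^{-1}dg)]=0$ in $H^1(X,\R)$. That is self-consistent but almost certainly not what the hypothesis means: the "lowest degree" of an even form $\mu$ is degree zero (a virtual rank/trace condition), not the degree-one piece of the odd Chern character on the other side of the equation. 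Moreover, the vanishing of $[\tr(g^{-1}dg)]$ in the converse direction actually follows from $[g]$ being torsion in $K^1(X)$ (hence $c_1(\det g)$ is torsion in the torsion-free group $H^1(X,\Z)$), not from any normalisation you impose. You have correctly sensed a wrinkle that the paper glosses over, but the repair you propose attributes it to the wrong source.
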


\begin{proof}
	For $$K^0(X,\R) \xrightarrow{\alpha} K^0(X,\R/\Z) \xrightarrow{\beta} K^1(X,\Z),$$ note that $\im(\alpha) \subseteq \ker(\beta)$ follows from the definition. We need to show $\ker(\beta) \subseteq \im(\alpha).$ Let $\cE_1-\cE_2=(g_1, (d,g^{-1}_1dg^{-1}_1),\mu_1)-(g_2, (d,g^{-1}_2dg^{-1}_2),\mu_2) \in \ker(\beta)$ so that $\beta(\cE_1-\cE_2)=[g_1]-[g_2]=0.$ In particular, $[g_1]=[g_2]$ if and only if there exists an identity matrix $\id$ of suitable rank in the unitary group, such that $g_1 \oplus \id $ is homotopic to $g_2 \oplus \id$. The direct sum means that  they sit along the diagonal of a suitably large matrix $h.$    This defines an element $(h, (d,h^{-1}dh), \mu_1) -(h, (d,h^{-1}dh), \mu_2)=(0,0, \mu_1-\mu_2)$  in $\im(\alpha),$ as the image of $\mu_1-\mu_2$ under $\alpha.$ This shows $ker(\beta) \subseteq im(\alpha)$ and thus is exact at $K^0(X,\R/\Z).$
	
	On the other hand, consider $$K^0(X,\R/\Z) \xrightarrow{\beta} K^1(X,\Z)  \xrightarrow{ch} K^1(X,\R).$$ Since any $[g]$ in $K^1(X)$ with vanishing Chern character lies in the torsion subgroup $K^1_{tors}(X),$ the sequence reduces to $$K^0(X,\R/\Z) \xrightarrow{\beta} K^1_{tors}(X) \to 0.$$ Hence, an element in $K^1_{tors}(X)$ lifts to an element in $K^0(X,\R/\Z)$ such that it is the image under $\beta.$ This shows  $ker(ch) \subseteq im(\beta) .$ To show the opposite direction, consider two elements $\cE_1$ and $\cE_2$ in $K^0(X,\R/\Z).$ By applying the odd Chern character to the image of $\beta,$ together with the exactness condition, we get  $ch([g_1]-[g_2])=[d(\mu_1 -\mu_2)]=0.$ So, $[g_1]-[g_2]$ lies in the kernel of $ch.$ This shows $im(\beta) \subseteq ker(ch)$ and thus is exact at $K^0(X,\Z).$

\end{proof}

			\subsection{The $\R/\Q$ Chern character $ch_{\R/\Q}$}
			Next, we define the $\R/\Q$ Chern character map $ch_{\R/\Q}$ between   $K^0(X,\R/\Z)$ and  $H^{\text{even}}(X,\R/\Q)$ such that  the following diagram commutes.

\begin{center}
	\begin{tikzcd}
		\cdots \to    K^0(X,\mathbb{R}) \arrow[d,"\cong"] \arrow[r,"\alpha"]  & K^0(X, \mathbb{R}/\mathbb{Z})  \arrow[r,"\beta"] \arrow[d,"\text{ch}_{\mathbb{R}/\mathbb{Q}}"] &	K^1(X,\Z)\arrow[r,"ch"] \arrow[d,"\text{ch}_{\mathbb{Q}}"] & K^1(X,\mathbb{R})\arrow[d,"\cong"]  \to \cdots \\
		\cdots \to   H^{even}(X,\mathbb{R})  \arrow[r,"r"]   & H^{even}(X,\mathbb{R}/\mathbb{Q})\arrow[r,"\widetilde{\beta}"]  & H^{odd}(X,\mathbb{Q}) \arrow[r,"i"]  & H^{odd}(X,\mathbb{R}) \to \cdots
	\end{tikzcd}
\end{center}


The upper (resp. bottom) row is the long exact sequence of $K$-theory (resp. cohomology) associated to the short exact sequence of the coefficients. Here $r, i$ and $\widetilde{\beta}$ are the reduction, inclusion and Bockstein maps in cohomology respectively. The maps in the upper row are given by \eqref{eq:alpha0}, \eqref{eq:beta0} and the odd Chern character.  
By tensoring the upper row by $\Q$ and by applying the Five lemma, $ch_{\R/\Q}$ is a rational isomorphism.

Now, the existence of $\mu$ in $K^0(X,\R)$ implies that the odd Chern character $ch(g-\id_N)=0,$ where $\id_N$ denotes the identity matrix of size $N\times N$ with respect to $g:X \to U(N),$ for some large $N\in \Z.$  So, $g-\id_N$ is torsion in $K^1(X)$ and there exists some positive $k$ such that $kg \cong \id_{kN},$ i.e. $g \oplus \cdots \oplus g =\text{Diag}(g,...,g)$ is homotopic to the identity matrix. Using the second definition of $K^1,$ i.e. by viewing $g$ as a smooth automorphism of a complex vector bundle $E,$ the unitary map $kg$ corresponds to an automorphism on $kE=E \oplus \cdots \oplus E.$ Let $k\nabla^E$ be its Hermitian connection and $\nabla^{kE}_0$ be a connection with trivial holonomy. Then, we obtain the conjugation $h^{-1}k\nabla^E h$ and $h^{-1}\nabla^{kE}_0 h$ by $h=kg$ of these two connections. For $t \in [0,1],$ fix $k\nabla^E$ and $\nabla^{kE}_0$  and vary $h$ within the homotopy class of $g,$ giving a path $h(t)$ connecting ${h(t)}^{-1}k\nabla^E h(t)$ and ${h(t)}^{-1}\nabla^{kE}_0 h(t).$  This defines $$ch(h(t),t\in [0,1]) \in \Omega^{\text{odd}}(X \times [0,1]).$$ 
By the standard construction in \cite{getzler1993odd,zhang2001lectures}, the respective transgression form is
$$\text{T} ch(h(t),[0,1]) = \varphi \int^1_0 \tr \Big( {h(t)}^{-1}\frac{\partial (h(t))}{\partial t} \Big( {h(t)}^{-1}(k\nabla^E) h(t) \Big)^{2n} \Big) dt.$$
This is an analog of \eqref{eq:oddtransg1}. Then, 
\begin{equation} \label{eq:rqch1}
\frac{1}{k}\text{T}ch(h(t),t\in [0,1]) -\mu
\end{equation}
defines an element in $H^{\text{even}}(X,\R).$

\begin{definition}
	Let $ch^0_{\R/\Q} (g,(d,g^{-1}dg), \mu) $  be the image of $  \frac{1}{k}\text{T}ch(h(t),t\in [0,1]) -\mu $
	under the map $H^{\text{even}}(X,\R)\to H^{\text{even}}(X,\R/\Q).$ 
\end{definition}  

Next, we show that $ch^0_{\R/\Q} (g,(d,g^{-1}dg), \mu)$ is well-defined.

\begin{lemma}
	Let $\cE=(g,(d,g^{-1}dg), \mu).$ As an image in $H^{\text{even}}(X,\R/\Q),$ $ch^0_{\R/\Q} (\cE)$ is independent of the choice of the homotopy class of $h$ and the choice of $k.$
\end{lemma}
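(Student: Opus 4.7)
The plan is to show that for any two admissible choices of $(k,h)$ producing representatives
\[\omega_i = \tfrac{1}{k_i}\text{T}ch(h_i(t),t\in[0,1]) - \mu, \qquad i=1,2,\]
the difference $\omega_1-\omega_2$ is represented by a closed form lying in the rational lattice $H^{\text{even}}(X,\Q)\subset H^{\text{even}}(X,\R)$. Since $\mu$ cancels in the difference, this reduces to the analogous statement for the normalised transgressions alone.

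First I would fix $k$ and vary the homotopy. Given two paths $h_1(t), h_2(t)$ in $\text{Aut}(kE)$ with common endpoints (the data $\nabla^{kE}_0$ and $k\nabla^E$ after conjugation by $kg$), I would concatenate $h_1$ with the reverse of $h_2$ to form a based loop, equivalently a smooth map $\widetilde{H}: X\times S^1 \to U(kN)$ defining a class in $K^1(X\times S^1)$. Using the fundamental transgression identity $d\,\text{T}ch(h(t))=ch(h(1))-ch(h(0))$, the two forms $\text{T}ch(h_1), \text{T}ch(h_2)$ share a boundary, so their difference is closed, and up to an exact form one has
\[\text{T}ch(h_1)-\text{T}ch(h_2) = \int_{S^1} ch(\widetilde{H}).\]
By Bott periodicity $K^1(X\times S^1)\cong K^1(X)\oplus K^0(X)$, the $S^1$-fibre integration picks out the $K^0(X)$-summand, and the integral Chern character $ch: K^0(X)\to H^{\text{even}}(X,\Q)$ takes values in rational cohomology. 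Hence the difference of transgressions lies in $H^{\text{even}}(X,\Q)$, and dividing by $k$ preserves rationality.

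For the $k$-dependence, let $(k_1,h_1)$ and $(k_2,h_2)$ be two admissible pairs. The product $k_1 k_2$ is again admissible, and a natural homotopy for $k_1 k_2\,g$ is the $k_2$-fold block-diagonal $h_1^{\oplus k_2}(t)$; similarly $h_2^{\oplus k_1}(t)$ serves. Additivity of the transgression form under block direct sums, immediate from \eqref{eq:oddtransg1}, yields $\text{T}ch(h_1^{\oplus k_2})=k_2\,\text{T}ch(h_1)$ and $\text{T}ch(h_2^{\oplus k_1})=k_1\,\text{T}ch(h_2)$, so
\[\tfrac{1}{k_1 k_2}\text{T}ch(h_1^{\oplus k_2})=\tfrac{1}{k_1}\text{T}ch(h_1), \qquad \tfrac{1}{k_1 k_2}\text{T}ch(h_2^{\oplus k_1})=\tfrac{1}{k_2}\text{T}ch(h_2).\]
Applying the previous step to the common multiplier $k_1 k_2$ with the two homotopies $h_1^{\oplus k_2}$ and $h_2^{\oplus k_1}$ shows that the two normalised transgressions agree modulo $H^{\text{even}}(X,\Q)$, which chains to $\tfrac{1}{k_1}\text{T}ch(h_1)\equiv \tfrac{1}{k_2}\text{T}ch(h_2) \pmod{H^{\text{even}}(X,\Q)}$ and completes the argument.

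The main obstacle lies in the first step: rigorously matching $\text{T}ch(h_1)-\text{T}ch(h_2)$ with the $S^1$-fibre integral of $ch(\widetilde{H})$ modulo exact forms, which is a direct but slightly tedious computation from the explicit formula \eqref{eq:oddtransg1} together with standard Chern--Weil manipulations, and then invoking the well-known fact that $ch$ of any $K$-theory class is a rational cohomology class. Once this is in hand, the $k$-independence is a purely formal block-additivity calculation.
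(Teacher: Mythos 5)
Your proposal is correct in substance and rests on the same key mechanism as the paper's proof --- namely that the difference of the two normalised transgression forms is, modulo exact forms, the Chern character of an honest $K$-theory class, hence a rational class that dies in $H^{\text{even}}(X,\R/\Q)$ --- but you implement both steps by a genuinely different route. For homotopy independence the paper never leaves $X$: it joins the two paths by the conjugation $r(h_2^{-1}h_1)$, forms the combined path $(h_1h_2^{-1})(t)$, and invokes the transgression-difference identity (citing \cite[Corollary 1.18]{zhang2001lectures}) to identify the difference with $\frac{1}{k}\text{T}ch((h_1h_2^{-1})(t))$ plus an exact form, which it then recognises as rational via $ch([h_1][h_2^{-1}])$. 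You instead close the two homotopies into a loop, read it as a class in $K^1(X\times S^1)$, and use Bott periodicity plus $S^1$-fibre integration; this is cleaner conceptually and makes the rationality manifest. For $k$-independence your passage to the common multiple $k_1k_2$ with block-additivity of $\text{T}ch$, reducing to the homotopy step already proved, is more systematic than the paper's direct computation of $\frac1k\text{T}ch(h)-\frac1{k'}\text{T}ch(h')$ as $\frac{1}{kk'}\text{T}ch((h'h^{-1})(t))+d\omega_n$. One caveat you should address: your loop construction assumes the two homotopies share endpoints, but in the paper's construction the endpoint connections are $h_i^{-1}(k\nabla^E)h_i$ and $h_i^{-1}\nabla^{kE}_0h_i$, which for distinct $h_1,h_2$ are only conjugate, not equal; to close the concatenation into a genuine loop you must insert the conjugation by $h_2^{-1}h_1$ (exactly the paper's correction), and similarly in the $k$-step the reference connections $(\nabla^{k_1E}_0)^{\oplus k_2}$ and $(\nabla^{k_2E}_0)^{\oplus k_1}$ need not literally coincide. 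Both are repairable by the same conjugation/auxiliary-path device, whose extra contribution is again exact or rational, so the gap is minor rather than structural.
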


\begin{proof}
	Let $g_1,g_2$ be two $K^1$ elements. Let $h_1(t)$ and $h_2(t)$ be the respective paths as constructed above. That is, $h_i(t)$ connects $h^{-1}_i k\nabla^E h_i$ and $h^{-1}_i\nabla^{kE}_0 h_i$ for $i=1,2.$ Simply denote their transgression forms by $\text{T}ch(h_1(t))$ and $\text{T}ch(h_2(t))$ respectively. Note that in general $h_1(t)$ and $h_2(t)$ may not coincide, in which case both paths lie within their homotopy class. However, it is possible to connect $h_1(t)$ and $h_2(t)$ at the left endpoint. Since both $h_1$ and $h_2$ are unitary, we consider the multiplication $h_2^{-1}h_1$ for a fixed $k.$ Then, the two left endpoints can be joined by the conjugation of  $h_2^{-1}h_1$ since 
	$$(h^{-1}_2 h_1)^{-1}(h^{-1}_2 \circ (k\nabla^E) \circ h_2) (h^{-1}_2 h_1) = h^{-1}_1 h_2 h^{-1}_2 \circ (k\nabla^E) \circ h_2 h^{-1}_2 h_1=h^{-1}_1 \circ (k\nabla^E) \circ h_1.$$
	Let $r(h_2^{-1} h_1)$ be the conjugation action. For $t\in [0,1],$ define 
	$$(h_1 h_2^{-1})(t):= h_1(t) \circ r(h_2^{-1} h_1) \circ h_2(t)^{-1}.$$ Then, the difference 
	\begin{equation}\label{eq:rqconndiff1}
	\frac{1}{k}\text{T}ch(h_1(t)) - \frac{1}{k}\text{T}ch(h_2(t))  \\ 
	= \frac{1}{k}\text{T}ch((h_1 h_2^{-1})(t)) +d\omega_n 
	\end{equation}  
	holds, where the second term of the RHS of \eqref{eq:rqconndiff1} is some exact form independent of $h_i,$  c.f. \cite[Corollary 1.18]{zhang2001lectures}. In particular, the difference \eqref{eq:rqconndiff1} is the same up to multiplication by a rational number, as the image of $ch([h_1][h_2^{-1}])=ch([h_1])\wedge ch([h_2^{-1}]) \in H^{\text{even}}(X,\Q)$ in $H^{\text{even}}(X,\R),$ so it vanishes when mapped into $H^{\text{even}}(X,\R/\Q).$ This shows that $ch^0_{\R/\Q} (\cE)$ is independent of the homotopy class of $h.$

	Next, for two different positive integers $k$ and $k',$ while keeping the choice of $g$ fixed, we get $h(t)=kg_t$ and $h'(t)=k'g_t.$ Then, the difference is 
	\begin{equation*}
	\frac{1}{k}\text{T}ch(h(t)) - \frac{1}{k'}\text{T}ch(h'(t)) 
	= \frac{1}{kk'} \text{T}ch((h' h^{-1})(t)) +d\omega_n.
	\end{equation*}  
	By a similar argument as in the previous paragraph, the difference is the same up to multiplication by a rational number, as the image of the odd Chern character in $H^{\text{even}}(X,\R/\Q)$ vanishes. So, the image of $ch^0_{\R/\Q}$ is independent of the positive integer $k.$
	
\end{proof}


				\section{Analytic duality pairing $K^0(X,\R/\Z) \times K_0(X)$}
In this section, we explain the formulation of the analytic  pairing in $\R/\Z$ $K^0$-theory by applying the Dai-Zhang eta-invariant. This main result of this paper is the following theorem. A detailed proof is given in the next section.

 \begin{theorem}\label{nondegenpair0}
	Let $M$ be an even dimensional closed $\spinc$ manifold and let $E$ be a complex vector bundle over $M.$ Let $X$ be a smooth compact base manifold , together with a smooth map $f:M \to X.$ Let $h=g \circ f: M \to U(N)$ be an $K^1$-element of $M$ and let $\tau$ be the trivial bundle in which $h$ acts as an automorphism. Let $\D^{\psi,h}_{E \otimes \tau,M \times [0,1]}$ be the Dirac operator twisted by $E$ and $\tau$ on the cylinder $M \times [0,1],$ defined by 
	\begin{equation}
	\D^{\psi,h}_{E \otimes \tau, M \times [0,1]}= \D_{E\otimes \tau} + (1-\psi)h^{-1}[\D_{E\otimes \tau},h].
	\end{equation} 
	Let $\bar{\eta}(\D^{\psi,h}_{E\otimes \tau, M \times [0,1]})$  be its reduced eta-invariant. Then, the analytic pairing 
	$$ K^0(X,\R/\Z) \times K_0(X) \longrightarrow \R/\Z$$ given by 
	\begin{align}
	\langle (g,(d,g^{-1}&dg),\mu),(M,E,f)  \rangle  \nonumber  \\
	= \;&\bar{\eta}\big( \D^{\psi,h}_{E \otimes \tau, M \times [0,1]} \big) -\int_M f^*\mu \wedge ch(E)\wedge \Td(M) \text{ mod } \Z \label{eq:K0pairing}
	\end{align} is well-defined and non-degenerate. 
\end{theorem}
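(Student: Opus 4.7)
The plan is to reduce the statement to two independent assertions: well-definedness, which amounts to showing the right-hand side of \eqref{eq:K0pairing} is insensitive to all the choices that go into both a representative of $(g,(d,g^{-1}dg),\mu)$ and of $(M,E,f)$; and non-degeneracy, which I would extract from the $\R/\Q$ Chern character diagram of Section 4 together with the Five Lemma. Throughout, the guiding analytic tool is the Dai--Zhang index theorem on the cylinder $M\times[0,1]$, which expresses the spectral flow/index of $\D^{\psi,h}_{E\otimes\tau,M\times[0,1]}$ subject to APS-type boundary conditions as an interior integral minus the eta-type defect $\bar\eta\bigl(\D^{\psi,h}_{E\otimes\tau,M\times[0,1]}\bigr)$. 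The key qualitative point is that the interior integral contains precisely a $\ch(h)\wedge\ch(E)\wedge\Td(M)$-type term, so the topological correction $\int_M f^*\mu\wedge\ch(E)\wedge\Td(M)$ in \eqref{eq:K0pairing} is designed, via the exactness condition $d\mu=\ch(g,d)-\tr(g^{-1}dg)$, to absorb it modulo $\Z$.

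First I would fix a $K^0(X,\R/\Z)$-cocycle and show independence of the $K$-cycle data. The three Baum--Douglas moves (disjoint union, bundle modification and bordism) have to be checked separately. Disjoint union is immediate from the additivity of eta and of the integral. For bordism $(W,F,\tilde f)$ with $\partial W = M_1 \sqcup -M_0$, I would apply the Dai--Zhang theorem to $W\times[0,1]$ with the pullback cocycle and read off that the difference of pairings equals an integer (the index on the bordism), hence vanishes mod $\Z$. Bundle modification follows from the multiplicativity of the eta-invariant on the sphere bundle together with the fact that the integrand is the same cohomological object pulled back via the modification map. Next, fixing $(M,E,f)$, I would verify invariance under the $\R/\Z$ $K^0$-relation of Definition 4.2: the reduced eta is additive on direct sums of twisting data, and the transgression correction $\mathrm{T}\ch(g_1,g_2,g_3)$ in \eqref{eq:k0rel} matches, after wedging with $\ch(E)\wedge\Td(M)$ and pulling back by $f$, exactly the change in the topological term required by the homotopy $g_2\simeq g_1\oplus g_3$. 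The ambiguity in $\mu$ modulo $d\Omega$ is killed by Stokes on the closed manifold $M$.

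The hardest step, and the one I expect to be the main obstacle, is the verification that $\bar\eta(\D^{\psi,h}_{E\otimes\tau,M\times[0,1]})$ depends on the $K^0(X,\R/\Z)$-cocycle only through the stated relation. Variationally, one must deform $h=g\circ f$ inside its class; the eta-invariant is not homotopy invariant, and its variation is governed by a local transgression formula in the spirit of Bismut--Freed and Getzler. Concretely I would write $\partial_t\bar\eta$ as the integral of a Chern--Simons-type odd form on $M\times[0,1]$ against $\ch(E)\wedge\Td(M)$, recognise that form as the pullback of the $\mathrm{T}\ch$ contribution, and then invoke the exactness condition $d\mu=\ch(g,d)-\tr(g^{-1}dg)$ together with Stokes to cancel it against the variation of $\int_M f^*\mu\wedge\ch(E)\wedge\Td(M)$, modulo an integer coming from spectral flow. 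The trivial gluing at the two ends of the cylinder and the cut-off $(1-\psi)$ need to be handled carefully so that the $h^{-1}[\D,h]$ perturbation is consistent with the APS boundary conditions used in \cite{dai2006index}.

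Finally, for non-degeneracy I would use the commutative diagram of Section 4 relating $\alpha,\beta$ and the $\R/\Q$ Chern character to the cohomological Bockstein sequence. By the Universal Coefficient Theorem and divisibility of $\R/\Z$, the natural map $K^0(X,\R/\Z)\to\Hom(K_0(X),\R/\Z)$ induced by any pairing satisfying the compatibilities proved above fits into a morphism of the two long exact sequences. On the image of $\alpha:K^0(X,\R)\to K^0(X,\R/\Z)$, the eta-term degenerates and the pairing reduces to the classical Chern character pairing of $\mu$ against the rational $K$-homology of $X$, which is non-degenerate after tensoring with $\Q$. On the quotient $K^0(X,\R/\Z)/\im(\alpha)\cong \ker(\ch\colon K^1(X)\to K^1(X,\R))=K^1_{\mathrm{tors}}(X)$, the pairing reduces via Lemma~\ref{exactK0} to a torsion pairing whose non-degeneracy follows from Lott's analytic $K^1$-pairing \cite{lott1994r} by the suspension isomorphism (working with $M\times S^1$ and the Poincar\'e dual $K$-cycle). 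The Five Lemma then promotes these two endpoint statements to an isomorphism, which is exactly the required non-degeneracy. The special case $X=S^n$ mentioned in the introduction provides a concrete sanity check that a non-trivial torsion class indeed pairs non-trivially.
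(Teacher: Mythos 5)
Your well-definedness plan is essentially the paper's: fix one side, check the Baum--Douglas moves and the $\R/\Z$ $K^0$-relation, and control the variation of the eta-term through the Dai--Zhang Toeplitz index theorem on the cylinder, cancelling the interior $ch(h)\wedge ch(E)\wedge\Td(M)$ contribution against the variation of the topological term via the exactness condition $d\mu=ch(g,d)-\text{Tr}(g^{-1}dg)$ and Stokes. That half is sound and matches Propositions 6.1--6.5 of the paper (the paper handles bundle modification by writing the lifted operator on $\Sigma H\times[0,1]$ as a sharp product with the Bott-twisted sphere operator of index one, which is the concrete form of your ``multiplicativity of eta'' step).

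The non-degeneracy half, however, has a genuine gap. You propose to split along the coefficient long exact sequence: on $\im(\alpha)$ reduce to the Chern character pairing, and on the quotient $K^1_{\mathrm{tors}}(X)$ reduce to Lott's analytic $K^1$-pairing ``by the suspension isomorphism,'' then apply the Five Lemma. Two things are missing and neither is routine. First, the Five Lemma needs the analytically defined map $K^0(X,\R/\Z)\to\Hom(K_0(X),\R/\Z)$ to commute with the maps $\alpha$, $\beta$ and their duals; since the eta-term does not obviously vanish or transform naturally under these maps (e.g.\ for a cocycle $(\mathrm{Id},(d,d),\mu)$ the operator is still a twisted APS boundary problem on the cylinder), this compatibility must be proved, not assumed. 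Second, and more seriously, the claimed reduction of the torsion part to Lott's $K^1$-pairing via suspension is exactly the kind of even/odd comparison the paper says is not available: establishing that the restriction of this $K^0$-pairing to torsion classes agrees with a suspended $K^1$-pairing would itself require an index-theoretic comparison on $M\times S^1$ that you do not sketch. The paper avoids both issues by a different route: it proves non-degeneracy locally and globalizes, showing the map is an isomorphism for $U\cong\R^n$ by an explicit computation on $S^n$ (trivial gluing since $K^1(S^n)=0$, the sharp product with $\D_{S^1}$ giving $\bar\eta\equiv 1/2$ mod $\Z$, and a non-vanishing integral term on generators), and then running a Mayer--Vietoris argument with the Five Lemma and induction on the size of a good cover of $X$. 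If you want to keep your exact-sequence strategy you must supply the two missing verifications above; otherwise the sphere-plus-Mayer--Vietoris argument is the workable path.
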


\subsection{Explanation of related terms}
Let $X$ be a smooth compact manifold. The Baum-Douglas even geometric $K$-homology, denoted as $K_0(X):=K_0(X,\Z),$ is a well-known extraordinary homology theory associated to $X$  defined by geometric data:
\begin{definition} \cite{baum1982k} \label{BDKhom0}
 A geometric $K_0$-cycle over $X$ is a triple $(M,E,f)$ where $M$ is an even dimensional closed $\spinc$ manifold, $E$ is a complex vector bundle over $M$ and $f:M \to X$ is a smooth map. The group $K_0(X)$ is generated by all isomorphic  $K_0$-cycles modulo the three relations: direct sum-disjoint union, vector bundle modification and bordism. 
\end{definition}

\begin{remark}
For simplicity, we assume $M$ to be connected. 
The $K_0$-homology is a $K^0(X)$-module where the cap product is given by  
\begin{equation}
V \cap (M,E,f) \mapsto (M, f^*V \otimes E ,f)
\end{equation} for a complex vector bundle $V$ over $X.$ 
Moreover, it has a well-defined Chern character map in $K$-homology
\begin{equation}
ch(M,E,f) = f_*[ch(E) \wedge \Td(M)]  \; \in H^{\text{even}}(X,\Q). 
\end{equation}
\end{remark}

In the following, we  explain the analytic term of \eqref{eq:K0pairing}.
At this stage, we shall call it the reduced Dai-Zhang eta-invariant. We sketch its construction following \cite{dai2006index} and relate it with the analytic pairing using a $K_0$-cycle $(M,E,f)$   and a $\R/\Z$ $K^0$-cocycle $(g,(d,g^{-1}dg),\mu).$  

Let $M$ be an even dimensional closed $\spinc$ manifold, $E$ be a complex vector bundle over $M,$ and $f:M \to X$ a smooth map where $X$ is a smooth compact manifold. Let $\D_{E,M}$ be the Dirac operator on $M$ twisted by $E.$ 
\begin{itemize}
	\item Twist $\D_{E,M}$ by $h=g\circ f :M \to U(N),$ defined on $L^2(S\otimes  E \otimes \tau)$ by acting as the identity on $L^2(S\otimes E)$ and $h$ acts as an automorphism on $\tau.$ Denote this by  $\D^h_{E\otimes \tau,M};$
	\item Extend $S\otimes  E \otimes \tau$ trivially to the cylinder $[0,1]\times M$ equipped with a product metric, i.e. over each $t \in [0,1]$ there is a copy of $E.$ Let $\psi=\psi(t)$ be a cut-off function on $[0,1]$ which is identically 1 in a $\epsilon$-neighbourhood of $M$ for small $\epsilon >0$ and 0 outside of a $2\epsilon$-neighbourhood of $M.$ Consider the Dirac-type operator 
	$$\D^{\psi}_{E\otimes \tau,M \times [0,1]}=(1-\psi)\D_{E\otimes \tau}+\psi h\D_{E\otimes \tau} h^{-1}$$ and take its conjugation 
	\begin{equation}
	\D^{\psi,h}_{E\otimes \tau,M \times [0,1]}=h^{-1}\D^{\psi}_E h = \D_{E\otimes \tau}+ (1-\psi) h^{-1}[\D_{E\otimes \tau}, h];
	\end{equation} 
	\item Assume that the Lagrangian $L \subset \ker(\D^h_{E\otimes \tau,M})$ exists and fix a choice,  and equip one end $M \times \{0\}$ with the modified Atiyah-Patodi-Singer boundary conditions 
	\begin{equation}
	P^\partial = P_{\geq 0} + P_L :  L^2_{\geq 0}(S \otimes E\otimes \tau) \to L^2_{\geq 0}(S\otimes E\otimes \tau |_M) \oplus L
	\end{equation} 
	where $P_{\geq 0}$ is the Atiyah-Patodi-Singer boundary projection \cite{atiyah1975spectral1}. Equip the Dirac-type operator on the other end $M \times \{1\}$ with $\id - h^{-1}P^\partial h.$ 
\end{itemize} 
Then, $(\D^{\psi,h}_{E\otimes \tau,M \times [0,1]}, P^\partial,\id - h^{-1}P^\partial h )$ is a self-adjoint elliptic boundary problem. For simplicity, we denote the boundary problem by $\D^{\psi,h}_{E\otimes \tau,M \times [0,1]},$ i.e. with the boundary conditions implicitly implied. Let the eta-function of $\D^{\psi,h}_{E\otimes \tau,M \times [0,1]}$ be given by the usual formula
\begin{equation}
\eta(\D^{\psi,h}_{E\otimes \tau,M \times [0,1]},s)=\sum_{\lambda \neq 0} \frac{sgn(\lambda)}{|\lambda|^s}
\end{equation}
 for $\text{Re}(s)$ sufficiently large and the sum runs through all non-zero eigenvalues $\lambda$ of $\D^{\psi,h}_{E\otimes \tau,M \times [0,1]}.$ Take $\eta(\D^{\psi,h}_{E\otimes \tau,M \times [0,1]}):= \eta(\D^{\psi,h}_{E\otimes \tau,M \times [0,1]},0).$ Let $\hat{\eta}(\D^{\psi,h}_{E\otimes \tau,M \times [0,1]})$ be the full  eta-invariant defined by 
\begin{equation}\label{eq:partialredeta1}
\widehat{\eta}(\D^{\psi,h}_{E\otimes \tau,M \times [0,1]}) = \frac{\eta (\D^{\psi,h}_{E\otimes \tau,M \times [0,1]}) + h(\D^{\psi,h}_{E\otimes \tau,M \times [0,1]})}{2}
\end{equation} where $h(\D^{\psi,h}_{E\otimes \tau,M \times [0,1]}) = \dim \ker(\D^{\psi,h}_{E\otimes \tau,M \times [0,1]}).$

\begin{definition}[\cite{dai2006index}] With the construction above, define an eta-type invariant on an even dimensional closed manifold by 
	\begin{equation}\label{eq:DZeta1}
	\widehat{\eta}(M,E,h)=\widehat{\eta}(\D^{\psi,h}_{E\otimes \tau,M \times [0,1]}) - \text{sf}\big\{ \D^{\psi,h}_{E\otimes \tau,M \times [0,1]}(s) ; s \in [0,1] \big\}
	\end{equation}
	where the second term is the spectral flow of $\D^{\psi,h}_{E\otimes \tau,M \times [0,1]}(s)$ given by 
	\begin{equation}\label{eq:familyD1}
	\D^{\psi,h}_{E\otimes \tau,M \times [0,1]}(s)=\D_{E\otimes \tau} + (1-s\psi)h^{-1}\D_{E\otimes \tau} h
	\end{equation} on $M \times [0,1]$ with boundary conditions  $P^\partial$ on $M \times \{0\}$ and $\id -h^{-1}P^\partial h$ on $M \times \{1\}.$ That is, \eqref{eq:familyD1} is a path connecting $h^{-1} \D_{E\otimes \tau} h$ and $\D^{\psi,h}_{E \otimes \tau,M \times [0,1]}.$ We call \eqref{eq:DZeta1} the Dai-Zhang eta-invariant. 
\end{definition}

\begin{remark}
	The spectral flow is a priori an integer (c.f. \cite{atiyah1976spectral3}), measuring the net change between the positive crossing (from negative to positive eigenvalues across 0) and the negative crossing (from positive to negative eigenvalues across 0). Upon reducing  \eqref{eq:DZeta1} modulo $\Z,$ we obtain an $\R/\Z$-valued spectral invariant
	\begin{equation}\label{eq:DZeta2}	
	\bar{\eta}(\D^{\psi,h}_{E \otimes \tau, M\times [0,1]}):= \widehat{\eta}(M,E,h) \text{ mod }\Z \equiv \widehat{\eta}(\D^{\psi,h}_{E \otimes \tau, M\times [0,1]}) \text{ mod }\Z.
	\end{equation}
\end{remark}

\begin{remark}
As shown in \cite{dai2006index}, the invariant $\bar{\eta}(M,E,h)$ is independent of the cut-off function $\psi.$ Moreover, by reducing modulo $\Z,$ the invariant $\bar{\eta}(M,E,h)$ is independent of the length $a >0$ of the cylinder $M \times [0,a].$ Recall that the construction of such an eta-invariant requires  the modified Atiyah-Patodi-Singer boundary conditions at both ends of the cylinder. It is a remarkable fact that it holds for more general $Cl(1)$-spectral sections  $P$ and is independent of the choice of such spectral sections, see \cite[\text{Prop} 5.6]{dai2006index}. Hence, the expression of  eta-invariant \eqref{eq:DZeta2} is valid, as it only depends on the variables $(M,E,h),$ i.e. the underlying geometry(metric and connection) of $M$ and $E$ and a choice of $h,$ regardless of other variables used in the construction. 
\end{remark}

The reduced Dai-Zhang eta invariant \eqref{eq:DZeta2} defines the analytic term of the pairing \eqref{eq:K0pairing}. On the other hand, the topological term is exactly the (reduced modulo $\Z$) integration of the pullback of some even form from $X$ and local characteristic forms on $M,$ 
\begin{equation}
\int_M f^*\mu \wedge ch(E,\nabla^E) \wedge \Td(M) \text{ mod }\Z.
\end{equation} In general, they are not mutually exclusive: the intertwined relation between these two parts lies in the exactness condition \eqref{eq:quantization1}. This summarises the explanation of the analytic pairing formula \eqref{eq:K0pairing}.


\section{Proof of Theorem ~\ref{nondegenpair0}}
This section is devoted to show the well-definedness and the non-degeneracy of \eqref{eq:K0pairing}.

\subsection{Well-definedness of $K^0$ pairing}
We show that the pairing \eqref{eq:K0pairing} is independent of the underlying geometry of the manifold and the vector bundle,  respects the Baum-Douglas $K$-homology relation and respects the $\R/\Z$ $K^0$-relation defined above.  

\subsubsection{Well-defined on the level of cycle}

\begin{proposition} \label{k0wd1}
	The analytic pairing \eqref{eq:K0pairing} is independent of the Riemannian metric of the manifold $M,$  the Hermitian metric and the connection on the complex vector bundle $E.$ 
\end{proposition}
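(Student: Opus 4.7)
The plan is a standard variational argument. Fix two choices of Riemannian metric on $M$, Hermitian metric, and Hermitian connection on $E$, interpolate them by a smooth one-parameter family $(g^M_s, h^E_s, \nabla^E_s)$ for $s\in[0,1]$, and let $\D^{\psi,h}_s$ denote the Dirac-type operator of Section~5 built from this data. The goal is to show that the $s$-derivative of \eqref{eq:K0pairing} vanishes in $\R/\Z$ at every $s$. Since the pairing is already known to be independent of the cut-off function $\psi$, the choice of Lagrangian, and the $Cl(1)$-spectral section used in the Dai-Zhang construction (Remark~5.3), all remaining choices are the geometric ones in question.

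First, the topological term is handled by Chern-Weil theory: there is a smooth family of odd-degree transgression forms $\omega_s$ on $M$ with
\begin{equation*}
\tfrac{d}{ds}\bigl(ch(E,\nabla^E_s)\wedge \Td(M,g^M_s)\bigr)=d\omega_s.
\end{equation*}
Differentiating the topological integrand and applying Stokes' theorem on the closed manifold $M$ gives
\begin{equation*}
\tfrac{d}{ds}\int_M f^*\mu \wedge ch(E,\nabla^E_s)\wedge \Td(M,g^M_s) \;=\; -\int_M f^*(d\mu)\wedge \omega_s,
\end{equation*}
and the exactness condition \eqref{eq:quantization1} rewrites this as $-\int_M f^*\bigl(ch(g,d)-\tr(g^{-1}dg)\bigr)\wedge \omega_s$.

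Next I would compute the variation of the analytic term by the standard variation formula for reduced eta-invariants of smooth families of self-adjoint Dirac-type boundary problems. Modulo $\Z$, this derivative equals the integral over $M\times[0,1]$ of a local transgression density coming from the heat-kernel expansion of $\tfrac{d}{ds}\D^{\psi,h}_s \cdot e^{-t(\D^{\psi,h}_s)^2}$ as $t\to 0^+$. Because the cylinder metric, the bundle $S\otimes E\otimes\tau$, and the connection are all products in the $[0,1]$-direction, and the only non-product datum (the twist $(1-\psi)h^{-1}[\D_{E\otimes\tau},h]$) is supported where $\psi$ varies near the two ends, integrating this density along $[0,1]$ collapses it to a form on $M$. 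A direct computation parallel to that of Dai-Zhang identifies this form as $\omega_s\wedge f^*\bigl(ch(g,d)-\tr(g^{-1}dg)\bigr)$: the transgression $\omega_s$ encodes the variation of the $(E,\nabla^E,g^M)$-data, while the odd Chern character factor arises from the conjugations by $h$ built into the opposite boundary conditions $P^\partial$ on $M\times\{0\}$ and $\id-h^{-1}P^\partial h$ on $M\times\{1\}$.

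With sign bookkeeping dictated by the cylinder orientation, the analytic variation cancels the topological one modulo $\Z$, so the pairing is constant in $s$ and hence independent of the geometric choices; integer jumps from spectral flow are invisible modulo $\Z$. Performing the interpolation separately on each of the three data sets (metric on $M$, Hermitian metric on $E$, connection on $E$) reduces the general case to these three. The main technical obstacle is the second step: rigorously establishing the variation formula for the reduced Dai-Zhang eta-invariant on the cylindrical Atiyah-Patodi-Singer boundary problem and verifying that the localised boundary conjugations by $h$ produce precisely the factor $f^*\bigl(ch(g,d)-\tr(g^{-1}dg)\bigr)$ without any extra local correction. Once this identification is in hand, the matching with the Chern-Weil transgression is routine.
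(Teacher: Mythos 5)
Your overall strategy (vary the geometric data, show the variations of the analytic and topological terms cancel via the exactness condition \eqref{eq:quantization1}) is the same idea as the paper's, but the paper realises it as a \emph{finite} bordism argument rather than an infinitesimal one: the two choices of metric/connection are placed at the two ends of the cylinder $N=M\times[0,1]$ with an interpolating metric $\gamma(t)+(dt)^2$ and path of connections, and the difference of the reduced Dai--Zhang eta-invariants is evaluated \emph{not} by a heat-kernel variation formula but by quoting the Dai--Zhang Toeplitz index theorem \cite[Theorem 2.3]{dai2006index}, which gives, modulo $\Z$, $\bar{\eta}(M_1,E_1,h)-\bar{\eta}(M_2,E_2,h)=\int_N \Td(\Omega_{g_\gamma})\wedge ch(\nabla^E_p)\wedge ch(h,d)$; Stokes' theorem then converts this into a boundary transgression term which is compared with the variation of the topological integral using \eqref{eq:quantization1}. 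This is precisely the step you label ``the main technical obstacle'' and do not carry out: in your write-up the variation formula for the reduced Dai--Zhang eta-invariant on the cylindrical boundary problem, and the identification of the resulting local density, are asserted (``a direct computation parallel to that of Dai--Zhang identifies this form as\ldots'') rather than proved. Since the entire content of the proposition lives in that identification, the proposal as it stands has a genuine gap; the available way to close it is exactly the paper's route, i.e.\ to invoke the already-proved index theorem on the cylinder instead of re-deriving a variational formula from heat-kernel asymptotics.

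A second, related concern is the precise form of the local term you claim. You engineer the analytic variation to be $\int_M \omega_s\wedge f^*\bigl(ch(g,d)-\tr(g^{-1}dg)\bigr)$ so that it cancels the topological variation on the nose, but what the Dai--Zhang index density actually produces is the \emph{full} odd Chern character $ch(h,d)$ of the gluing automorphism (as in \eqref{eq:welldef2a}), not $ch(h,d)$ minus its degree-one piece; whether the degree-one term $\tr(h^{-1}dh)$ contributes, and why it does not spoil the cancellation, is a point that must be argued (by degree counting against the transgression form, or otherwise) rather than built into the claimed formula. Verifying ``no extra local correction'' is therefore not routine bookkeeping but the substance of the proof, and your proposal defers exactly that.
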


\begin{proof}
Fix a $K^0$-cocycle $(r,(d,r^{-1}dr),\mu)$ and  a $K_0$-cycle $(M,E,f).$  For $i=1,2,$ let  $M_i=(M,g_i)$ be the same even dimensional manifold with different Riemannian metrics $g_i,$ which is the boundary of a cylinder $N= M \times [0,1],$ i.e. $\partial N \cong M_1 \sqcup -M_2.$ Let 
\begin{equation}
g_\gamma =\gamma(t) +(dt)^2
\end{equation} 
 be the extended metric on $N,$ where $\gamma(t)$ is a path in the space of Riemannian metrics on $M.$ Let $g^{E_i},\nabla^{E_i}$ be the metric and Hermitian connection on $E_i=E|_{M_i}$ respectively. Let $\tau$ be the trivial bundle in which a fixed $K^1$-element $h=r \circ f : M \to U(N)$ acts as an automorphism. Let $\nabla^{E\otimes \tau}$ be the Hermitian tensor product connection on $E \otimes \tau.$ Set 
$$\nabla^{E\otimes \tau}_p=\partial_t\wedge dt+ p(t)$$ to be a path of connections on $E \otimes \tau$ extended to $N,$ where $p(t)$  is a path of connections on $E\otimes \tau$ over $M.$ Let $\D^{\psi,h}_{E_i \otimes \tau, M \times [0,1]}$ be the corresponding Dirac operators at the two ends $ M \times \{i\}$.  Let $\bar{\eta}(M_i,E_i,h)= \bar{\eta}(\D^{\psi,h}_{E_i \otimes \tau, M \times [0,1]}).$
Then, we only need to compute 
\begin{align}\label{eq:welldef2}
\bar{\eta}(M_1,E_1,&h) - \bar{\eta}(M_2,E_2,h)  \nonumber \\
&- \Bigg( \int_{M_1} \Td(\Omega_{M_1}) \wedge ch(\nabla^{E_1}) \wedge f^*\mu - \int_{M_2} \Td(\Omega_{M_2}) \wedge ch(\nabla^{E_2})  \wedge f^*\mu \Bigg) \text{ mod } \Z
\end{align} where $\Omega_{M_i}$ is the respective Riemannian curvature of $M_i$ for $i=1,2.$

Let $\theta$ be the transgression form of $\Td \wedge ch$ on $N$ satisfying $$d \theta= \Td(\Omega_{M_1}) \wedge ch(\nabla^{E_1}) - \Td(\Omega_{M_2}) \wedge ch(\nabla^{E_2}).$$ The integral part of \eqref{eq:welldef2} is immediate: 

\begin{equation}
\int_{M_1} \Td(\Omega_{M_1}) \wedge ch(\nabla^{E_1}) \wedge f^*\mu - \int_{M_2} \Td(\Omega_{M_2}) \wedge ch(\nabla^{E_2})  \wedge f^*\mu = \int_{\partial N} d \theta \wedge f^*\mu \text{ mod } \Z.
\end{equation}

By the Dai-Zhang Toeplitz index formula \cite[Theorem 2.3]{dai2006index}, upon reducing modulo $\Z$, 
\begin{equation}\label{eq:welldef2a}
\bar{\eta}(M_1,E_1,h) - \bar{\eta}(M_2,E_2,h)=\int_{N} \Td(\Omega_{g_\gamma}) \wedge ch(\nabla^E_p) \wedge ch(h,d)  \text{ mod } \Z
\end{equation} where $\Omega_{g_\gamma}$ is the respective Riemannian curvature of $N$ and $ch(h,d)$ is the odd Chern character of $h$.  By Stokes theorem, the left hand side of \eqref{eq:welldef2a} is $\int_{\partial N} \theta \wedge ch(h,d).$ By the exactness condition \eqref{eq:quantization1} and Stokes theorem again,  the difference  \eqref{eq:welldef2} is zero. 
\end{proof}


\subsubsection{Well-defined under the $\R/\Z$ $K^0$-relation}

\begin{proposition} \label{k0wd2}
	The analytic pairing \eqref{eq:K0pairing} respects the $\R/\Z$ $K^0$-relation \eqref{eq:k0relation0}. 
\end{proposition}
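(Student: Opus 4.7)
The plan is to reduce the verification of the relation to the additivity of the eta invariant under orthogonal decomposition, with the defect paid for by the transgression term that already appears in the $\R/\Z$ $K^0$-relation. Fix a $K_0$-cycle $(M,E,f)$ and three cocycles $\cE_i=(g_i,(d,g_i^{-1}dg_i),\mu_i)$ with $g_2 \simeq g_1 \oplus g_3$ and
\begin{equation*}
\mu_2 = \mu_1+\mu_3 - \text{T}ch(g_1,g_2,g_3).
\end{equation*}
Writing $h_i = g_i \circ f$, substituting $\mu_2$ into the topological term gives
\begin{equation*}
\int_M f^\ast \mu_2 \wedge ch(E)\wedge \Td(M) = \int_M f^\ast \mu_1\wedge \cdots + \int_M f^\ast \mu_3 \wedge \cdots - \int_M f^\ast \text{T}ch(g_1,g_2,g_3)\wedge ch(E)\wedge \Td(M).
\end{equation*}
Thus the required identity $\langle\cE_2,(M,E,f)\rangle \equiv \langle\cE_1,(M,E,f)\rangle+\langle\cE_3,(M,E,f)\rangle$ modulo $\Z$ reduces to proving the eta-level identity
\begin{equation*}
\bar{\eta}\bigl(\D^{\psi,h_2}_{E\otimes \tau,M\times[0,1]}\bigr) - \bar{\eta}\bigl(\D^{\psi,h_1}_{E\otimes \tau,M\times[0,1]}\bigr) - \bar{\eta}\bigl(\D^{\psi,h_3}_{E\otimes \tau,M\times[0,1]}\bigr) \equiv -\int_M f^\ast \text{T}ch(g_1,g_2,g_3)\wedge ch(E)\wedge \Td(M) \pmod{\Z}.
\end{equation*}

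I would proceed in two steps. First, for the orthogonal sum $h_1\oplus h_3$ the twisted bundle splits as $E\otimes(\tau_1\oplus\tau_3) = (E\otimes\tau_1)\oplus(E\otimes\tau_3)$, and by the very construction of $\D^{\psi,h}_{E\otimes\tau,M\times[0,1]}$ the operator, its boundary projection $P^\partial$ and the comparison boundary condition $\id - h^{-1}P^\partial h$ all split into the direct sum of the corresponding data for $h_1$ and $h_3$. Hence both $\eta$ and $h(\cdot)$ are additive, giving
\begin{equation*}
\bar{\eta}\bigl(\D^{\psi,h_1\oplus h_3}_{E\otimes \tau,M\times[0,1]}\bigr) \;=\; \bar{\eta}\bigl(\D^{\psi,h_1}_{E\otimes \tau,M\times[0,1]}\bigr) + \bar{\eta}\bigl(\D^{\psi,h_3}_{E\otimes \tau,M\times[0,1]}\bigr) \pmod{\Z}.
\end{equation*}
It then suffices to compare $\bar{\eta}(\D^{\psi,h_2})$ with $\bar{\eta}(\D^{\psi,h_1\oplus h_3})$.

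Second, pick a homotopy $h_t$ from $h_1\oplus h_3$ at $t=0$ to $h_2$ at $t=1$, obtained by pulling back via $f$ the homotopy in $U(N)$ given by the relation $g_2 \simeq g_1\oplus g_3$. Viewing $h_t$ as a unitary on the cylinder $N = M\times[0,1]$ acting on a trivial bundle $\tau$ and mimicking the argument of Proposition~6.1, I would invoke the Dai-Zhang Toeplitz index formula on the enlarged cylinder $N\times[0,1]$ with product metric and product connection on $E$. The bulk integrand is $\Td(M)\wedge ch(E)\wedge ch(h_t,d)$; integrating out the $t$-direction collapses $ch(h_t,d)$ to the transgression form $\text{T}ch(h_0,h_1)=f^\ast \text{T}ch(g_1\oplus g_3, g_2)$ on $M$, so modulo $\Z$,
\begin{equation*}
\bar{\eta}\bigl(\D^{\psi,h_2}_{E\otimes \tau,M\times[0,1]}\bigr) - \bar{\eta}\bigl(\D^{\psi,h_1\oplus h_3}_{E\otimes \tau,M\times[0,1]}\bigr) \;=\; \int_M \Td(M)\wedge ch(E)\wedge f^\ast \text{T}ch(h_1\oplus h_3,h_2).
\end{equation*}
Since $\text{T}ch(h_1\oplus h_3,h_2) = -\text{T}ch(h_2,h_1\oplus h_3) = -\text{T}ch(g_1,g_2,g_3)$ up to an exact form (which drops out after pairing against $\Td(M)\wedge ch(E)$ by Stokes), combining this with the additivity gives exactly the required identity, and the transgression terms in the analytic and topological pieces cancel.

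The main obstacle is the second step, namely setting up the variational formula for the reduced Dai-Zhang eta invariant under change of $h$. Proposition~6.1 carried out the analogous argument when $h$ was held fixed and the metric/connection data on $(M,E)$ were varied; here I need the symmetric variant in the $h$-direction, together with the verification that the modified Atiyah-Patodi-Singer boundary conditions $(P^\partial,\id-h_t^{-1}P^\partial h_t)$ fit into a continuous family along $h_t$ so that the Dai-Zhang Toeplitz formula applies. The independence of the choice of $Cl(1)$-spectral section invoked in Remark~5.4 should make these boundary projections interchangeable modulo $\Z$, reducing the argument to a direct Stokes computation on $N\times[0,1]$ exactly as in the proof of Proposition~6.1.
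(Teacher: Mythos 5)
Your proposal is correct and follows essentially the same route as the paper: a homotopy $h_t$ between $h_2$ and $h_1\oplus h_3$ on the cylinder, the Dai-Zhang Toeplitz index theorem plus Stokes to express the variation of the reduced eta invariant as $\int_M \Td(M)\wedge ch(E)\wedge \text{T}ch$, and cancellation of the transgression against the $\mu$-relation \eqref{eq:k0rel}. The only organizational difference is that you split off the direct-sum additivity of $\bar{\eta}$ and the substitution of $\mu_2$ as separate explicit steps (and flag the continuity of the boundary conditions along $h_t$), whereas the paper folds both into a single Stokes computation on $[0,1]\times M$ under the stated compatibility assumptions.
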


\begin{proof}
Fix a $K_0$-cycle $(M,E,f).$ For $i=1,2,3,$ consider $\R/\Z$ $K^0$-cocycles $\cE_i=(r_i,(d,r^{-1}_i dr_i),\mu_i)$   such that $\cE_2=\cE_1 + \cE_3,$ i.e. $r_2 \simeq r_1 \oplus r_3$ and satisfying \eqref{eq:k0rel}: $$\mu_2-\mu_1 -\mu_3= Tch(r_1, r_2,r_3).$$ Let $h_i=r_i \circ f : M \to U(N_i).$ For simplicity, we denote \eqref{eq:K0pairing} by  $\bar{\eta}(\cE^M_i)$  for each $i.$ 
Assume there is a smooth path $h_t$ connecting $h_2$ and $h_1 \oplus h_3,$ both of which sit at each end of the cylinder $M \times [0,1]$ respectively. Moreover, assume  that the extension to the cylinder is compatible with all of the relevant data associated to each end, for instance there is a path $\tilde{\mu_t}=f^*\mu_t$  connecting $\tilde{\mu}_2$ at $ M \times \{0\}$ and $\tilde{\mu}_1+\tilde{\mu}_3$ at $ M \times \{1\}$ by some suitable cut-off function. Then, by the Dai-Zhang Toeplitz index theorem for manifolds with boundary \cite[Theorem 2.3]{dai2006index} and Stokes theorem, we compute
\begin{align*}
\bar{\eta}(\cE^M_2) &-\bar{\eta}(\cE^M_1) -\bar{\eta}(\cE^M_3)  \\
&= \int_{[0,1] \times M} \Td([0,1] \times M) \wedge ch([0,1] \times E) \wedge  \big(  ch(h_t;t\in [0,1]) -d\tilde{\mu}_t \big)  \text{ mod }\Z \\
&= \int_M \int^1_0 \Td(M) \wedge ch(E) \wedge \big(  ch(h_t;t\in [0,1]) -d\tilde{\mu}_t \big) \text{ mod }\Z  \\
&= \int_M \Td(M) \wedge ch(E) \wedge \big( \text{T}ch(h_1, h_2, h_3 ) -(\tilde{\mu}_2 - \tilde{\mu}_1- \tilde{\mu}_3) \big) \text{ mod }\Z =0. 
\end{align*} 
This shows that $\bar{\eta}(\cE^M_2) = \bar{\eta}(\cE^M_1) +\bar{\eta}(\cE^M_3)$ whenever $r_2 \simeq r_1 \oplus r_3.$ 
\end{proof}

\subsubsection{Well-defined under the $K$-homology relations}

\begin{lemma}\label{etawd2}
	The analytic term $\bar{\eta}\big( \D^{\psi,h}_{E \otimes \tau, M \times [0,1]} \big)$ respects the $K$-homology relations \cite[\S 11]{baum1982k}. 
\end{lemma}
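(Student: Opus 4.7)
The plan is to verify in turn the three Baum-Douglas relations of Definition \ref{BDKhom0}: (i) direct sum and disjoint union, (ii) bordism, and (iii) vector bundle modification. In each case one shows that the difference
$\bar\eta(\D^{\psi,h_1}_{E_1\otimes\tau,M_1\times[0,1]}) - \bar\eta(\D^{\psi,h_2}_{E_2\otimes\tau,M_2\times[0,1]})$
equals, modulo $\Z$, the corresponding difference of topological terms $\int f^*\mu\wedge\ch(E)\wedge\Td(M)$, so that the combined pairing \eqref{eq:K0pairing} descends to $K_0(X)$. As in Propositions \ref{k0wd1} and \ref{k0wd2}, the master tool is the Dai-Zhang Toeplitz index theorem \cite[Thm 2.3]{dai2006index}, coupled with the exactness condition \eqref{eq:quantization1} and Stokes' theorem.

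For (i), the decomposition $E=E_1\oplus E_2$ yields an orthogonal splitting $\D^{\psi,h}_{E\otimes\tau,M\times[0,1]} = \D^{\psi,h}_{E_1\otimes\tau,M\times[0,1]} \oplus \D^{\psi,h}_{E_2\otimes\tau,M\times[0,1]}$, compatible with the modified APS boundary conditions, so the eta-function and the kernel dimension add. The topological term is likewise additive since $\ch(E_1\oplus E_2)=\ch(E_1)+\ch(E_2)$. The disjoint-union case is formally identical, as every datum splits over connected components.

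For (ii), suppose $(M,E,f)$ bounds a triple $(W,\cE,F)$ with $W$ an odd-dimensional $\spinc$ manifold-with-boundary, $\cE|_{\partial W}\cong E$ and $F|_{\partial W}=f$. Extending $h=g\circ f$ to $H=g\circ F$ on $W$ and applying the Dai-Zhang Toeplitz index theorem to $W\times[0,1]$ expresses $\bar\eta(\D^{\psi,h}_{E\otimes\tau,M\times[0,1]})\bmod\Z$ as the interior integral $\int_W \Td(W)\wedge\ch(\cE)\wedge\ch(H,d)$. Substituting $\ch(g,d)=d\mu+\tr(g^{-1}dg)$, pulling back by $F$, and applying Stokes, the $\tr$-piece represents an integral cohomology class on $W$ (so vanishes mod $\Z$ after integration), and the exact piece reproduces $\int_M f^*\mu\wedge\ch(E)\wedge\Td(M)$. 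This precisely cancels the topological term, giving bordism invariance of the pairing.

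For (iii), let $\hat M = S(H\oplus\underline{\R})\xrightarrow{\pi}M$ be the unit sphere bundle of a $\spinc$ bundle $H\to M$ of even rank, and let $\hat E = \pi^*E\otimes\mathcal{B}$ where $\mathcal{B}$ is the Bott class along the fibre. The strategy is a fibrewise reduction analogous to the sharp-product argument of \eqref{eq:sharpprod0}--\eqref{eq:sharpeta0}: the fibre Dirac operator twisted by $\mathcal{B}$ has index $1$, so modulo $\Z$ the eta of $\D^{\psi,\hat h}_{\hat E\otimes\tau,\hat M\times[0,1]}$, with $\hat h = h\circ\pi$, should collapse to $\bar\eta(\D^{\psi,h}_{E\otimes\tau,M\times[0,1]})$. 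On the topological side, fibre integration of $\Td(T\hat M/M)\wedge\ch(\mathcal B)=1$ shows $\int_{\hat M}\pi^*(f^*\mu\wedge\ch(E)\wedge\Td(M))$ equals $\int_M f^*\mu\wedge\ch(E)\wedge\Td(M)$. The hard part will be (iii): the fibrewise sharp-product reduction requires careful bookkeeping of the modified APS boundary conditions on $\hat M\times\{0,1\}$, control of the spectral-flow correction in \eqref{eq:DZeta1} under pullback by $\pi$, and multiplicativity of the Dai-Zhang eta-invariant under a non-trivial sphere bundle; this last point is not an off-the-shelf statement and may require an auxiliary bordism between $(\hat M,\hat E,f\circ\pi)$ and a product situation where case (ii) can be invoked.
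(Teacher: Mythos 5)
Your proposal differs from the paper's proof in two structurally important ways.

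First, the lemma is about the \emph{analytic} term $\bar\eta$ \emph{alone}; the paper proves that $\bar\eta$ by itself respects all three Baum--Douglas relations, and separately (Lemma \ref{integralwd2}) that the topological term does too, before combining them in Proposition \ref{k0wd3} with the remark that the exactness condition \eqref{eq:quantization1} plays no role in that step. Your bordism argument instead uses the exactness condition to transfer between the two terms: you apply Dai--Zhang on $W\times[0,1]$, then invoke $d\mu = \ch(g,d)-\tr(g^{-1}dg)$ and Stokes to cancel against the integral term. That proves the well-definedness of the full pairing, but it does not establish the present lemma's stronger assertion that the eta term respects bordism on its own. (For what it is worth, your route via the index theorem on $W\times[0,1]$ is closer in spirit to the paper's Proposition \ref{k0wd1}, which handles metric and connection variation; the paper's own treatment of bordism in this lemma is terser, merely decomposing $\bar\eta$ over the boundary components of the $K$-chain.)

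Second, for vector bundle modification you correctly identify the sharp-product reduction as the key mechanism and that the fibre Dirac operator has index one, but you stop at flagging the multiplicativity of the Dai--Zhang eta under a nontrivial sphere bundle as a difficulty. The paper does close that gap: it lifts the spinor and twisting bundles to $\Sigma H\times[0,1]$, uses the Atiyah--Bott--Shapiro--type splitting $S_{\Sigma H\times[0,1]}\cong \widetilde{S_{M\times[0,1]}}\hat\otimes\widetilde{S_{S^{2p}}}$, forms the sharp product $P=\widetilde{\D^{\psi,g}_{F,M\times[0,1]}}\,\#\,\widetilde{\D_{\beta,S^{2p}}}$, and identifies $P$ with the primitive $\spinc$ Dirac operator on $\Sigma H\times[0,1]$ by \emph{local triviality} of the sphere bundle, citing the family identification in \cite[Prop.\ 7]{baum2018}. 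Once that identification is made, the ordinary sharp-product formula $\eta(P)=\text{Ind}(\widetilde{\D_{\beta,S^{2p}}}^+)\cdot\eta(\widetilde{\D^{\psi,g}_{F,M\times[0,1]}})$ applies, the index factor is $1$, and the kernel dimensions match; no auxiliary bordism is needed. You should incorporate this local-triviality step to complete case (iii), and reorganize case (ii) so that it does not lean on the topological term.
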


\begin{proof} \vspace{-0.5em}
	The following approach is inspired by \cite{benameur2006differential}. For simplicity, we denote $\bar{\eta}\big( \D^{\psi,h}_{E \otimes \tau, M \times [0,1]} \big)$  by $\bar{\eta}(M,E,h).$ It is straightforward for the case of direct sum-disjoint union, i.e.   
	\begin{equation}
	\bar{\eta}\big((M,E_1,h) \sqcup (M,E_2,h) \big)= \bar{\eta}(M,E_1 \oplus E_2,h)= \bar{\eta}(M,E_1,h) + \bar{\eta}(M, E_2,h)
	\end{equation} where the Dirac operator splits into $\D^{\psi,h}_{E_1 \otimes \tau, M \times [0,1]}  \oplus \D^{\psi,h}_{E_2 \otimes \tau, M \times [0,1]}.$
	For bordism, let $(W,F,\varphi)$ be a $K$-chain such that  $(\partial W, F|_{\partial W},\varphi|_{\partial W}) \cong (M,E,f) \sqcup (-M',E',f').$ Then, 
	 \begin{align*}
	 \bar{\eta}(\partial W, F|_{\partial W},\varphi|_{\partial W}) 
	 = \bar{\eta}\big((M,E,f) \sqcup (-M',E',f')\big) 
	 = \bar{\eta}(M,E,f) + \bar{\eta}(-M', E',f)
	 \end{align*} where the Dirac operator is given by $\D^{\psi,h}_{E \otimes \tau, M \times [0,1]} \oplus \D^{\psi,h'}_{E' \otimes \tau', M' \times [0,1]}.$
	 
	The relation of vector bundle modification is given by  
	\begin{equation}
	(M,E,f)\sim 	(\Sigma H, \beta \otimes \rho^*E, f \circ \rho),
	\end{equation} where $H$ is  a $\spinc$ vector bundle over $M,$ $\underline{\R}$ is the trivial real line bundle, $\Sigma H=S(H \oplus \underline{\R})$ is the sphere bundle, $\rho :\Sigma H \to M$ is the projection and  $\beta$ is the Bott bundle over $\Sigma H.$ Since $M$ is an even dimensional $\spinc$ manifold, so is $\Sigma H.$ Thus, the consideration of the Dai-Zhang eta-invariant $\eta(\Sigma H, \beta \otimes \rho^*E, f \circ \rho)$ is valid. 
	Via  $r: X \to U(N),$ the composition $g=r \circ f$  defines an element in $K^1(M)$ and  $h=g \circ \rho: S(H \oplus \underline{\R}) \to U(N)$  defines an element in $K^1(S(H \oplus \underline{R})).$ Let $\tau$ be the trivial bundle where $g$ acts on and $S_M$ be the spinor bundle on $M.$ Now, we extend the tensor product bundle $S_M \otimes E \otimes \tau$ on $M$ trivially  to the cylinder $M \times [0,1],$ denoted by $S_{M\times [0,1]} \otimes F$ for $F=E \otimes \tau$.  By the Dai-Zhang construction, we obtain the associated Dirac operator  $\D^{\psi,g}_{F, M \times [0,1]}.$ Let $\widetilde{\D^{\psi,g}_{F, M \times [0,1]}}$ be its lift to $\Sigma H \times [0,1].$ This requires some explanation. Note that, there is a lift $\widetilde{S_{M\times [0,1]}}$ of $S_{M\times [0,1]} \otimes F$ to $\Sigma H \times [0,1]$ via $\rho'=\rho \times t$ where $t\in [0,1].$ Let $S_{S^{2p}}$ be the spinor bundle on the even spheres $S^{2p}.$ Denote its lift to $\Sigma H \times [0,1]$ by $\widetilde{S_{S^{2p}}}.$  Then, by \cite{atiyah1968index} there is an isomorphism of the tensor product 
	$$S_{\Sigma H \times [0,1]} \cong \widetilde{S_{M\times [0,1]}}  \hat{\otimes} \widetilde{S_{S^{2p}}}$$ where $S_{\Sigma H \times [0,1]} $ is the primitive spinor bundle associated to  $T(\Sigma H \times [0,1])$ of the $\spinc$ manifold $\Sigma H \times [0,1].$ The `full' bundle data on $\Sigma H \times [0,1]$ is now 
	\begin{equation}\label{eq:bdledata1}
	S_{\Sigma H \times [0,1]} \otimes \widetilde{\beta} \otimes (\rho')^* F.
	\end{equation}  Let  $\D_{\beta,S^{2p}}$ be the Dirac operator on $S^{2p}$ twisted by the Bott bundle $\beta,$ with $\widetilde{\D_{\beta,S^{2p}}} $ its lift to $\Sigma H \times [0,1],$ acting on \eqref{eq:bdledata1} via $\widetilde{S_{S^{2p}}}$ and $\widetilde{\beta}$ and by the identity on others.  On the other hand,  the lift $\widetilde{\D^{\psi,g}_{F, M \times [0,1]}}$ acts on \eqref{eq:bdledata1} via $\widetilde{S_{M\times [0,1]}}$ and $(\rho')^* F$ and by the identity on others.  That is, both of the lifted Dirac operators $\widetilde{\D^{\psi,g}_{F, M \times [0,1]}}$ and $\widetilde{\D_{\beta,S^{2p}}} $ act on the  bundle \eqref{eq:bdledata1}, as well as the primitive $\spinc$ Dirac operator $\D^{\psi',h}_{\widetilde{\beta} \otimes (\rho')^*F,\Sigma H \times [0,1]}.$ 
	Let $P$ be the sharp product of the two operators
	\begin{equation}
	P=\widetilde{\D^{\psi,g}_{F, M \times [0,1]}} \: \# \: \widetilde{\D_{\beta,S^{2p}}} 
	=\begin{pmatrix} \widetilde{\D^{\psi,g}_{F, M \times [0,1]}} \otimes 1 & 1 \otimes \widetilde{\D_{\beta,S^{2p}}}^- \\ 1 \otimes \widetilde{\D_{\beta,S^{2p}}}^+ & -\widetilde{\D^{\psi,g}_{F, M \times [0,1]}} \otimes 1  \end{pmatrix}.
	\end{equation}
	It is an elliptic operator on $\Sigma H\times [0,1]$ acting on  the bundle \eqref{eq:bdledata1}. Moreover, $P$ can be identified with the primitive  Dirac operator  $\D^{\psi',h}_{\widetilde{\beta} \otimes (\rho')^*F,\Sigma H \times [0,1]}$ on $\Sigma H\times [0,1]$ by the local triviality of the fibration $\Sigma H\times [0,1] \to M \times [0,1].$ One can alternatively view $P$ as the sharp product $$\widetilde{\D^{\psi,g}_{F, M \times [0,1]}} \: \# \: \mathcal{D}_{\beta}$$ 
	where $\mathcal{D}_{\beta}$ is a family of elliptic operators $\mathcal{D}$ given by the Dirac operator on $S(H_m \oplus \underline{\R})$ for $m \in M \times [0,1],$ and $(\mathcal{D}_{\beta})_m$  is identified with $\widetilde{\D_{\beta,S^{2p}}}$  by \cite[Proposition 7]{baum2018}. 
	
	All of the Dirac operators considered above are ordinary. So, we may apply the usual formula as in \cite{atiyah1976spectral3} or  \eqref{eq:sharpeta0}. The eta-invariant of the sharp product operator $P$ can be calculated by 
	$$\eta(P)=\text{Ind}\Big(\widetilde{\D_{\beta,S^{2p}}}^+  \Big) \cdot \eta\Big( \widetilde{\D^{\psi,g}_{F, M \times [0,1]}} \Big) = \eta\Big( \widetilde{\D^{\psi,g}_{F, M \times [0,1]}} \Big)$$
	since $\text{Ind}\Big(\widetilde{\D_{\beta,S^{2p}}}^+  \Big) =1$ by the Atiyah-Singer index theorem.
	This shows that 
	$$\eta\Big(\D^{\psi',h}_{\widetilde{\beta} \otimes (\rho')^*F,\Sigma H \times [0,1]} \Big) =\eta\Big( \widetilde{\D^{\psi,g}_{F, M \times [0,1]}} \Big).$$ 
	The rest of the proof involves the argument of the dimension of the kernel of the Dirac operator, which is standard. In particular, the kernel of $P$ or equivalently $\D^{\psi',h}_{\widetilde{\beta} \otimes (\rho')^*F,\Sigma H \times [0,1]}$ coincides with the kernel of $\widetilde{\D^{\psi,g}_{F, M \times [0,1]}}.$ Thus, the reduced eta-invariant is invariant under vector bundle modification.
	\end{proof}

\begin{lemma}\label{integralwd2}
	The integral term $\int_M f^*\mu \wedge ch(E)\wedge \Td(M) \text{ mod } \Z$ respects the $K$-homology relations \cite[\S 11]{baum1982k}. 
\end{lemma}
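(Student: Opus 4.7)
The verification splits across the three Baum--Douglas relations. Direct sum-disjoint union is immediate: $ch(E_1\oplus E_2)=ch(E_1)+ch(E_2)$ and disjoint unions of domains give additive integrals, so the integral term is unchanged on the nose. For vector bundle modification $(M,E,f)\sim(\Sigma H,\beta\otimes\rho^*E,f\circ\rho)$, I would use the projection formula for the sphere bundle $\rho:\Sigma H\to M$ together with the splitting $\Td(\Sigma H)=\rho^*\Td(M)\wedge\Td(\Sigma H/M)$ and the fiberwise identity
\begin{equation*}
\rho_{*}\!\left(ch(\beta)\wedge\Td(\Sigma H/M)\right)=1
\end{equation*}
(which is the Atiyah--Singer index of the Bott-twisted Dirac operator on an even sphere, equal to $1$ by Bott periodicity). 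Pushing forward then yields equality of the two integrals, hence mod-$\Z$ invariance under vector bundle modification.

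The interesting case is bordism. Let $(W,F,\varphi)$ be a $K$-chain with $\partial W = M\sqcup(-M')$ and boundary data extending $E\sqcup E'$ and $f\sqcup f'$. Since $ch(F)$ and $\Td(W)$ are closed, Stokes' theorem converts the difference of the two boundary integrals into
\begin{equation*}
\int_W \varphi^*(d\mu)\wedge ch(F)\wedge\Td(W)=\int_W\bigl(ch(h,d)-\tr(h^{-1}dh)\bigr)\wedge ch(F)\wedge\Td(W),
\end{equation*}
after substituting the exactness condition $d\mu=ch(g,d)-\tr(g^{-1}dg)$ with $h=g\circ\varphi:W\to U(N)$. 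I would then recognise this Chern--Weil integral as computing the index pairing of $[h\otimes F]\in K^1(W)$ with the $\spinc$ fundamental class of $W$, i.e.\ the Toeplitz-type index on $W$, and hence conclude integrality via the Atiyah--Singer / Dai--Zhang Toeplitz index theorem. Reducing modulo $\Z$ yields zero, establishing bordism invariance.

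The main obstacle is the integrality assertion in the bordism step: the Chern--Weil expression is not \emph{a priori} integer-valued because $W$ has boundary, and a naive appeal to Stokes is circular here (the integrand is $d$-exact on $W$, so Stokes only returns the boundary integral one started from). The substantive input must be index-theoretic, most naturally the Dai--Zhang Toeplitz index formula applied to $W$ directly, or alternatively a gluing argument extending $W$ to a closed odd-dimensional $\spinc$ manifold on which Atiyah--Singer applies, thereby pinning down integrality of the relevant odd Chern-character pairing.
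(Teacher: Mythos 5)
Your handling of the first two relations is correct and essentially identical to the paper's: disjoint union/direct sum is the same additivity observation, and for vector bundle modification the paper integrates over the fibre using a partition of unity subordinate to a trivialising cover of $\rho:\Sigma H\to M$ together with $\int_{S^{2p}} ch(\beta)\wedge\Td(S^{2p})=1$, which is exactly your projection-formula identity $\rho_*\big(ch(\beta)\wedge\Td(\Sigma H/M)\big)=1$ written out locally.

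The bordism step is where you diverge from the paper, and the gap you flag yourself is genuine and is not repaired by the fix you propose. The paper does not attempt your Stokes argument at all: its bordism case only records that the evaluation of the integral term on the boundary cycle $(\partial W,F|_{\partial W},\varphi|_{\partial W})\cong(M,E,f)\sqcup(-M',E',f')$ splits additively over the two components. You instead try to show that the integral term \emph{alone} takes equal values mod $\Z$ at the two ends of the bordism, which after Stokes and the exactness condition \eqref{eq:quantization1} requires the integrality
\begin{equation*}
\int_W \varphi^*\big(ch(g,d)-\tr(g^{-1}dg)\big)\wedge ch(F)\wedge\Td(W)\;\in\;\Z .
\end{equation*}
This is false in general precisely because $W$ has boundary: the Dai--Zhang Toeplitz index theorem you invoke expresses the index on $W$ as this bulk Chern--Weil integral \emph{plus} reduced eta-invariant corrections from $\partial W$ (this is exactly how it is used in Propositions~\ref{k0wd1} and~\ref{k0wd2}), so it pins down the bulk term only up to those eta defects --- which are the very quantities the analytic term $\bar{\eta}$ of the pairing is designed to absorb. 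The gluing alternative has the same defect: capping or doubling $W$ changes the integral by uncontrolled boundary contributions. In short, separate bordism invariance of the topological term is not available in general; only the combination $\bar{\eta}-\int$ is bordism-invariant, and that statement belongs to the well-definedness of the full pairing rather than to this lemma. To stay within the lemma as the paper uses it, the bordism case should be read as additivity of the boundary evaluation over the components of $\partial W$, with the genuine invariance deferred to the combined pairing.
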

	\begin{proof}
	Fix a $\R/\Z$ $K^0$-cocycle $\cV=(g,(d,g^{-1}dg),\mu).$ Let $\cE=(M,E,f)$ be a $K_0$-cycle. For direct sum-disjoint union, it is straightforward to see that the integral of the sum splits into the sum of the integral. For bordism, consider a $K$-chain $(W,F,g)$ and by pairing $\cV$ with each term in $(\partial W,F|_{\partial W},g|_{\partial W}) \cong (M,E,f) \sqcup (-M',E',f'),$  it is immediate that 
	\begin{align*}
	\int_W (&g|_{\partial W})^*\mu \wedge ch(F|_{\partial W}) \wedge \Td(\partial W) \text{mod }\Z \\
	&=\int_M f^*\mu  \wedge ch(E)  \wedge \Td(M) \text{mod }\Z + \int_{M'} {f'}^*\mu  \wedge ch(E')  \wedge \Td(M') \text{mod }\Z.
	\end{align*}
	For vector bundle modification,  $(M,E,f) \sim (\Sigma H, \beta \otimes \rho^*E, f\circ \rho),$ we compute 
	\begin{align*}
	\int_{\Sigma H} &(f \circ \rho)^* \omega \wedge ch(\beta \otimes \rho^*F) \wedge \Td(\Sigma H) \text{ mod } \Z\\
	&= \sum_{U_\alpha} \varphi_\alpha \int_{U_\alpha \times S^{2p}} f^*(\omega|_{U\alpha}) \wedge ch(\beta) \otimes ch(E|_{U_\alpha}) \wedge \Td(U_\alpha \times S^{2p}) \text{ mod } \Z \\
	&= \sum_{U_\alpha} \varphi_\alpha \int_{U_\alpha} f^*(\omega|_{U\alpha}) \wedge ch(E|_{U_\alpha}) \wedge \Td(U_\alpha) \int_{S^{2p}} ch(\beta) \wedge \Td(S^{2p}) \text{ mod } \Z \\
	&= \sum_{U_\alpha} \varphi_\alpha \int_{U_\alpha} f^*(\omega|_{U\alpha}) \wedge ch(E|_{U_\alpha}) \wedge \Td(U_\alpha) \text{ mod } \Z\\
	&= \int_M f^*\omega \wedge ch(E )\wedge \Td(M) \text{ mod } \Z.
	\end{align*} 
	Here, $\varphi_\alpha$ is a partition of unity subordinate to an open cover $\{U_\alpha\}$ of $M$ and the second integral (over $S^{2p}$) on the second line is known to have index 1 by  the Atiyah-Singer index theorem. This completes the proof. 
\end{proof}

\begin{proposition} \label{k0wd3}
	The analytic pairing  \eqref{eq:K0pairing}  respects the $K$-homology relations \cite[\S 11]{baum1982k}.
\end{proposition}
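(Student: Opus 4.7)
The plan is to simply combine Lemma~\ref{etawd2} and Lemma~\ref{integralwd2}. The analytic pairing \eqref{eq:K0pairing} is, by definition, the difference modulo $\Z$ of two quantities: the reduced Dai-Zhang eta-invariant $\bar{\eta}\bigl(\D^{\psi,h}_{E\otimes\tau, M\times[0,1]}\bigr)$ and the topological integral $\int_M f^*\mu\wedge ch(E)\wedge \Td(M)$. Lemma~\ref{etawd2} shows the first term is invariant under each of the three Baum-Douglas relations, and Lemma~\ref{integralwd2} shows the same for the second. Since the $\R/\Z$ group operation is compatible with subtraction, the pairing itself is invariant.

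More concretely, I would verify each of the three relations separately to keep the argument transparent. For \emph{direct sum-disjoint union}, both terms split as an additive sum over the components (the operator becomes a direct sum, the integral splits over the disjoint union), so the pairing is additive and the relation holds trivially modulo $\Z$. For \emph{bordism}, if $(W,F,\varphi)$ is a $K$-chain with $\partial(W,F,\varphi)=(M,E,f)\sqcup(-M',E',f')$, Lemma~\ref{etawd2} gives
\begin{equation*}
\bar{\eta}(M,E,h)+\bar{\eta}(-M',E',h')=\bar{\eta}(\partial W, F|_{\partial W},\varphi|_{\partial W}),
\end{equation*}
while Lemma~\ref{integralwd2} gives the analogous relation for the topological integrals. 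Subtracting and reducing mod $\Z$ yields the bordism invariance of \eqref{eq:K0pairing}.

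For \emph{vector bundle modification} $(M,E,f)\sim (\Sigma H,\beta\otimes\rho^*E,f\circ\rho)$, Lemma~\ref{etawd2} produces the equality of reduced eta-invariants via the sharp product decomposition and the fact that $\ind(\widetilde{\D_{\beta,S^{2p}}}^+)=1$, while Lemma~\ref{integralwd2} reduces the integral over $\Sigma H$ to the integral over $M$ using the fiberwise integration of $ch(\beta)\wedge\Td(S^{2p})$, again by the Atiyah-Singer index theorem for the Bott bundle. Combining these identities, both boundary-correction and characteristic-form contributions match on the two sides, so the difference (i.e.\ the pairing) is preserved.

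The argument is thus essentially bookkeeping: assemble the two lemmas, note that the exactness condition \eqref{eq:quantization1} does not need to be invoked here (it was already used in Propositions~\ref{k0wd1} and~\ref{k0wd2}), and conclude that \eqref{eq:K0pairing} descends to a well-defined pairing on the Baum-Douglas equivalence classes. I do not expect a genuine obstacle: the substantive analytic work has been done in Lemma~\ref{etawd2}, particularly the vector bundle modification case where the sharp product identification of the lifted Dirac operator with the primitive $\spinc$ Dirac operator on $\Sigma H\times[0,1]$ is the nontrivial input. The present proposition simply packages these two ingredients.
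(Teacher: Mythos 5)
Your proposal is correct and matches the paper's own proof: Proposition~\ref{k0wd3} is proved there in one line by combining Lemma~\ref{etawd2} and Lemma~\ref{integralwd2}, noting (as you do) that the exactness condition linking the two terms plays no role. Your relation-by-relation unpacking is just a more explicit version of the same bookkeeping.
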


\begin{proof}
The link between these two terms (via the exactness condition \eqref{eq:k0relation0}) does not play a role here, so the claim follows from Lemma \ref{etawd2} and Lemma \ref{integralwd2}. 
\end{proof}

\subsection{Non-degeneracy of $K^0$ pairing}
We show the non-degeneracy by an argument of Mayer-Vietoris sequence for the $K^0$ pairing.  The approach adapted here is inspired by  Savin-Sternin \cite{savin2002eta}, in which their argument works for the duality pairing on abstract cycles. In contrast, the following proof is much more delicate as explicit (co)cycles are involved.      First, we show that  \eqref{eq:K0pairing} is an isomorphism for a contractible open set $U \cong \R^n.$ Then, by the assumption that the isomorphism holds for contractible $U,V$ and intersection $U\cap V,$ it holds for $X=U\cup V.$ Lastly, we apply an induction on the size of open covers.  

To do this, we need a description of  $K_0(U) \cong K_0(\R^n)$ for positive even $n.$
Consider the short exact sequence of the induced $K_0$ groups associated to the one-point compactification of the Euclidean space $\R^n$
\begin{equation}
0 \longrightarrow  K_0(\{\infty\}) \longrightarrow K_0(S^n) \longrightarrow K_0(\R^n) \longrightarrow 0.
\end{equation}
Recall that for even $n,$ the geometric $K$-homology of even sphere $K_0(S^n)$ is 
\begin{equation}
K_0(S^n) \cong \Z\langle (\pt,\pt \times \C, i) \rangle \oplus \Z \langle (S^n, \beta, \id) \rangle.
\end{equation} Here $i : \{\infty\} \to S^n$ is the inclusion map and $\beta$ is the non-trivial Bott bundle over $S^n.$ Since $K_0(\{\infty\}) \cong \Z$ is generated by $(\pt, \pt \times \C, \id)$ and coincides with $\ker[K_0(S^n) \to K_0(\R^n)],$ we obtain $$K_0(\R^n) \cong \widetilde{K}_0(S^n)$$ where $\widetilde{K}_0(S^n)$ denotes the reduced $K$-homology of $S^n,$ generated by the non-trivial cycle. Thus,  it suffices to consider the pairing in $\widetilde{K}_0(S^n).$ 

\begin{remark}
	For $n=2,$ recall that there is a canonical line bundle  $L_0$ over $S^2 \cong \C P^1.$ Then, the Bott bundle is $\beta_0=L_0 -1$ where $1$ is the trivial line bundle, denoted by $\beta_0 \in \widetilde{K}^0(S^2).$ To see the Bott bundle over $n$-spheres for $n>2,$ we observe that by the multiplicative property of reduced $K$-theory of $S^2$
	$$\widetilde{K}^0(S^2) \times \cdots \times \widetilde{K}^0(S^2) \to \widetilde{K}^0(S^2 \wedge \cdots \wedge S^2) =\widetilde{K}^0(S^n)$$
	where $n=2r$ for $r$ times the wedge of 2-spheres. Then, the Bott bundle $\beta \in \widetilde{K}^0(S^n) $ is the $r$-th exterior tensor product of $\beta_0$ $$\beta = \beta_0 \boxtimes \cdots \boxtimes \beta_0= (L_0-1)^r.$$ Since $ch(L_0-1)=c_1(L_0),$ by the multiplicative property of the Chern character, we have $ch(\beta)=c_1(L_0)^r$  which is integral. 
\end{remark}
Consider the following exact sequence
$$\cdots \to \widetilde{K}^0(S^n) \to \widetilde{K}^0(S^n,\R) \to \widetilde{K}^0(S^n,\R/\Z) \to \widetilde{K}^1(S^n) \to \cdots$$
Recall that the odd $K$-theory $K^1(S^n)$ can be regarded as the set  of homotopy classes $[S^n,U(\infty)]$ of continuous maps from $S^n$ to the stabilised unitary group $U(\infty),$ which is by definition the $n$-th homotopy group $\pi_n(U(\infty)).$ By Bott Periodicity, $K^1(S^n) \cong \pi_n(U(\infty))$ is trivial when $n$ is even. Hence we have 
$$\cdots \to \widetilde{K}^0(S^n) \xrightarrow{ch} \widetilde{K}^0(S^n,\R) \to \widetilde{K}^0(S^n,\R/\Z) \to 0.$$ By viewing $\widetilde{K}^0(S^n,\R/\Z)$ as the cokernel of $ch,$ its generator can be represented by 
\begin{equation}
(0,0, \mu - ch(\beta)), 
\end{equation} where $\mu \in \Omega^{\text{even}}(S^n)/d\Omega$ such that $d\mu=0$ and $\beta \in \widetilde{K}^0(S^n).$ 

Now, we are ready to show that the Pontryagin duality $K^0$ pairing implemented by \eqref{eq:K0pairing} is non-degenerate.

The case of the $K^0$ pairing for $\R^n$ for positive even $n$ reduces to the $\widetilde{K}^0$ pairing for $S^n.$ In particular, it suffices to show that the map
$$\widetilde{K}^0(S^n,\R/\Z) \to \text{Hom}(\widetilde{K}_0(S^n),\R/\Z) $$ implemented by 
\begin{equation}\label{eq:k0snpairing}
\bar{\eta}\big(\D^\beta_{S^n \times [0,1]}\big) -\int_{S^n} \big(\mu-ch(\beta)\big) \wedge ch(\beta) \wedge \Td(S^n) \text{ mod } \Z
\end{equation} 
is an isomorphism, which then implies the non-degeneracy of the pairing. Since we are working on generators, the injectivity is implied and we only need to show the surjectivity, i.e. it suffices to show that the image of the pairing is not identically zero in $\R/\Z.$ Since $TS^n$ is stably trivial, the Todd form $\Td(S^n)=1.$  The integrand then consists of two parts: $$ \mu \wedge ch(\beta) \text{  and  }   ch(\beta) \wedge ch(\beta).$$ It is clear that the  integration of $ch(\beta)^2 $ over $S^n$ is zero modulo $\Z$ since $ch(\beta)$ is just the wedge product of $c_1(L)$ and is already the top degree form on $S^n.$  For $\mu \wedge ch(\beta),$  since $ch(\beta)$ is already the top degree form on $S^n,$ only the lowest term (the 0-form of $\mu$) survives in the integration. The 0-form is in general an $\R$-valued function on $S^n.$ Hence, we conclude that
\begin{equation}\label{eq:k0int1}
\int_{S^n} \mu \wedge ch(\beta) \wedge \Td(S^n) \text{ mod }\Z  \neq 0. 
\end{equation}

To consider the reduced Dai-Zhang eta-invariant $\bar{\eta}$ of the even sphere $S^n,$  we need to compute $\eta \big(\D^\beta_{S^n \times [0,1]}\big).$ Let $\D_{\beta, S^n}=\D^{\pm}_{\beta,S^n}$ be the $\Z_2$-graded Dirac operator on $S^n$ twisted by $\beta$. By the Atiyah-Singer index theorem,  
\begin{equation}
\text{Ind}(\D^+_{\beta, S^n})=\int_{S^n} ch(\beta) \wedge \Td(S^n) =1 \neq 0.
\end{equation}Hence, we cannot directly apply the method of Dai-Zhang to compute the eta-invariant, as it requires  the vanishing of $\ind(\D^+_{\beta, S^n})$ to ensure the existence of Lagrangian subspaces  in $\ker(\D_{\beta, S^n})$ for the modified boundary conditions. To circumvent this, we adopt a method suggested in \cite{dai2011eta}.   

First, we extend $\beta$ over $S^n$ trivially to the cylinder $S^n \times [0,1].$ Then, the two ends of the interval $[0,1]$ are identified into a circle, and glue the bundle over $S^n \times \{0\}$ and $S^n \times \{1\}$ using a smooth automorphism in $K^1(S^n).$ Since $K^1(S^n)$ is trivial for even $n,$ the gluing map is just the identity (infinite) matrix in $U(\infty)$ and the bundles at the two ends are identified trivially, giving a well-defined bundle $\beta' \to S^n \times S^1.$ Moreover, $S^n \times S^1$ is closed and therefore no boundary conditions 
are required. Let  $\D_{\beta', S^n\times S^1}$  be the resulting twisted Dirac operator. It can be rewritten as the sharp product
\begin{equation}\label{eq:hash1}
\D_{\beta', S^n\times S^1} = \D_{\beta, S^n} \#  \D_{S^1}= \begin{pmatrix} \D_{S^1} \otimes 1 & 1 \otimes \D^{-}_{\beta,S^n} \\ 1 \otimes \D^{+}_{\beta,S^n} & -\D_{S^1} \otimes 1 \end{pmatrix}.
\end{equation}
Then, its Atiyah-Patodi-Singer eta-invariant is $$\eta_{\text{APS}}(\D_{\beta', S^n\times S^1}) =\text{Ind}(\D^+_{\beta, S^n}) \cdot \eta_{\text{APS}}(\D_{S^1} )=1 \cdot 0=0.$$
To determine the kernel of \eqref{eq:hash1}, let 
$
\begin{pmatrix}
x_1 \otimes y_1 \\ x_2 \otimes y_2
\end{pmatrix}$ be the spinors.  Then, the calculation reduces to 
\begin{equation}
\begin{cases}
\D_{S^1} (x_1) \otimes y_1 = -x_2 \otimes \D^{-}_{\beta,S^n}(y_2)\\
\D_{S^1} (x_2) \otimes y_2 = x_1 \otimes \D^{\beta,+}_{S^n}(y_1).
\end{cases}
\end{equation}
For instance, if $(x_1,x_2) \in \ker(\D_{S^1}),$ then $(y_1,y_2) \in \ker (\D^{\beta,+}_{S^n} \oplus \D^{\beta,-}_{S^n}).$ In particular, we have 
\begin{equation}
\ker(\D^{\beta'}_{S^n\times S^1}) \cong \ker(\D_{S^1}) \dot{\cap} \big( \ker (\D^{\beta,+}_{S^n}) \oplus \ker(\D^{\beta,-}_{S^n})\big)
\end{equation} where $\dot{\cap}$ means the `intersection' of elements as in spinors, not as the  intersection of spaces.  Since the Dirac operator $\D^{+}_{\beta,S^n}$ has one dimensional kernel and zero dimensional cokernel, we conclude that 
\begin{equation}
\text{dim}\ker(\D_{\beta', S^n\times S^1}) \equiv \text{dim}\ker (\D_{S^1})=1.
\end{equation} 
Hence, for all positive even $n,$ the reduced Dai-Zhang eta invariant is 
\begin{equation}\label{eq:k0eta1}
 \bar{\eta}(\D_{\beta',S^n \times S^1})  \equiv \frac{1}{2} \text{ mod }\Z.
\end{equation}

By \eqref{eq:k0int1} and  \eqref{eq:k0eta1}, we conclude the following lemma.

\begin{lemma}\label{snpairing0}
	For even $n \in \Z_+,$ the map $\widetilde{K}^0(S^n,\R/\Z) \to \text{Hom}(\widetilde{K}_0(S^n),\R/\Z)$ implemented by \eqref{eq:k0snpairing} is an isomorphism, and thus so is the case of $U \cong \R^n.$  
\end{lemma}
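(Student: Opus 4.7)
The plan is to combine the two computations \eqref{eq:k0int1} and \eqref{eq:k0eta1} carried out just above the statement, and then lift the result on $S^n$ back to $\R^n$ using the long exact sequence for the one-point compactification already set up in this subsection. Both groups $\widetilde{K}^0(S^n,\R/\Z)$ and $\widetilde{K}_0(S^n)$ are cyclic, with explicit generators $(0,0,\mu - ch(\beta))$ and $(S^n,\beta,\id)$ respectively, so the map in question is entirely determined by its value on one pair of generators. Consequently, once the image of that pair is shown to be a non-trivial element of $\R/\Z$, injectivity is automatic.

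For surjectivity, I would evaluate \eqref{eq:k0snpairing} on the pair of generators and read off
\begin{equation*}
\bar{\eta}\bigl(\D_{\beta',S^n\times S^1}\bigr) - \int_{S^n}\mu\wedge ch(\beta)\wedge\Td(S^n) + \int_{S^n} ch(\beta)\wedge ch(\beta)\wedge\Td(S^n) \pmod{\Z}.
\end{equation*}
The Todd class is trivial because $TS^n$ is stably trivial, the $ch(\beta)^2$ integral vanishes mod $\Z$ as already noted (it is a wedge product that exceeds the top degree or is integral), and \eqref{eq:k0eta1} contributes $\tfrac{1}{2}$. Hence the pairing reduces to $\tfrac{1}{2} - \int_{S^n}\mu\wedge ch(\beta) \pmod{\Z}$. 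Since $ch(\beta)$ is a top-degree form on $S^n$, only the $0$-form component of $\mu$ survives the integration, and this component is an arbitrary $\R$-valued function; varying $\mu$ in its class in $\Omega^{\text{even}}(S^n)/d\Omega$ therefore lets the image sweep out all of $\R/\Z$. This simultaneously yields non-vanishing (injectivity on the cyclic source) and exhaustion of the cyclic target $\Hom(\widetilde{K}_0(S^n),\R/\Z) \cong \R/\Z$.

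To pass from $S^n$ to $U \cong \R^n$, I would invoke the split short exact sequence
\begin{equation*}
0 \longrightarrow K_0(\{\infty\}) \longrightarrow K_0(S^n) \longrightarrow K_0(\R^n) \longrightarrow 0
\end{equation*}
recorded above, which identifies $K_0(\R^n) \cong \widetilde{K}_0(S^n)$, together with the analogous collapse $\widetilde{K}^0(S^n,\R/\Z) \cong \mathrm{coker}(ch)$ obtained from Bott periodicity $\widetilde{K}^1(S^n)=0$. Under these identifications the analytic pairing on $\R^n$ is precisely the one computed above on $S^n$, and the isomorphism transfers directly.

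I do not anticipate a substantive obstacle: the heavy lifting, namely the sharp-product computation rewriting $\D_{\beta',S^n\times S^1}$ as $\D_{\beta,S^n}\#\D_{S^1}$, the vanishing of $\eta_{\mathrm{APS}}(\D_{S^1})$, and the identification $\dim\ker(\D_{\beta',S^n\times S^1}) = 1$ that forces $\bar\eta \equiv \tfrac{1}{2}$, is already complete. What remains is the bookkeeping observation that \emph{both} the analytic and topological contributions are, separately, non-trivial in $\R/\Z$, so their difference realises every residue class as $\mu$ varies; this is the content I would package as the short final argument.
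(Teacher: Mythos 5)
Your proposal follows the paper's own argument essentially verbatim: you rely on the same two computations \eqref{eq:k0int1} and \eqref{eq:k0eta1}, the same reduction $K_0(\R^n)\cong\widetilde{K}_0(S^n)$ via the one-point compactification, the same generator description $(0,0,\mu-ch(\beta))$, and the same final observation that only the degree-zero part of $\mu$ survives the integration against the top-degree form $ch(\beta)$. The only cosmetic difference is that you phrase surjectivity as ``varying $\mu$ sweeps out all of $\R/\Z$'' whereas the paper phrases it as ``the image is not identically zero,'' but the underlying reasoning and the evidence invoked are identical.
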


Since $K_1(\R^n) \cong K_1(S^n)\cong 0$ for even $n,$ the relevant part of the Mayer-Vietoris sequence in the analytic $K^0$ pairing is 
\begin{center}
	\begin{tikzcd}[column sep = tiny]
		\arrow[r]	 & K^{0}_c(U \cap V,\R/\Z) \arrow[d] \arrow[r] & K^{0}_c(U,\R/\Z)\oplus K^{0}_c(V,\R/\Z) \arrow[d]  \arrow[r] & K^{0}(U \cup V,\R/\Z) \arrow[d] \arrow[r]  & \; \\
		\arrow[r]	 & \text{Hom}(K_{0}(U \cap V) , \R/\Z)  \arrow[r] &\text{Hom}(K_{0}(U)  , \R/\Z) \oplus \text{Hom}(K_{0}(V) , \R/\Z) \arrow[r] &\text{Hom}(K_{0}(U \cup V) , \R/\Z) \arrow[r]  & 
		\bigskip
	\end{tikzcd}  
\end{center}

\begin{lemma}\label{pairingisom0}
	Assume the isomorphism holds for contractible  open sets $U, V$ and the intersection $U \cap V.$  Then, it holds for $X=U \cup V,$ i.e. the map $K^0(X,\R/\Z) \to \text{Hom}(K_0(X),\R/\Z)$ implemented by \eqref{eq:K0pairing} is an isomorphism.
\end{lemma}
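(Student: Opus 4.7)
The plan is to apply the Five Lemma to the Mayer-Vietoris diagram shown above, extended by one further column on each side. On the top row I would invoke the Mayer-Vietoris long exact sequence for $K^{*}(-,\R/\Z)$ associated to the open cover $\{U,V\}$ of $X = U \cup V$; on the bottom row I would apply $\text{Hom}(-,\R/\Z)$ to the Mayer-Vietoris long exact sequence in Baum-Douglas geometric $K$-homology. Exactness of the bottom row uses that $\R/\Z$ is divisible, hence injective as a $\Z$-module, so $\text{Hom}(-,\R/\Z)$ is an exact functor. The vertical maps in every column are the analytic Pontryagin pairings---either in $K^{0}$ via \eqref{eq:K0pairing} or in $K^{1}$ via Lott's pairing---and the two outer $K^{1}$-columns reduce to isomorphisms $0 \to 0$ in the contractible case, since $K^{1}_{c}(\R^{n}) \cong \widetilde{K}^{1}(S^{n}) \cong 0$ for even $n$ by Bott periodicity, as already noted just before Lemma \ref{snpairing0}.

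Next I would verify that every square in the five-column diagram commutes. For the squares induced by the inclusions $U \cap V \hookrightarrow U$ and $U \hookrightarrow U \cup V$, commutativity is essentially formal: restriction of a cocycle $(g,(d,g^{-1}dg),\mu)$ to a smaller open set restricts both the data entering the Dai-Zhang eta contribution (through the pullback of the twisting automorphism $h = g \circ f$) and the data entering the topological integral (through the pullback of $\mu$), so the pairing transforms functorially. The substantive square is the one involving the Mayer-Vietoris connecting homomorphism: one must show that if $\xi \in K^{0}(U \cup V,\R/\Z)$ is the MV-image of a class on $U \cap V$, then its pairing with a $K_{0}$-cycle $(M,E,f)$ on $U \cup V$ agrees with the pairing of its pre-image against the $K$-homology MV-image of the cycle on $U \cap V$. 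I expect this to follow by representing $\xi$ by cocycle data supported in a collar neighbourhood of $U \cap V$ and then applying the Dai-Zhang boundary index theorem in the same spirit as in the proofs of Propositions \ref{k0wd1} and \ref{k0wd2}: the eta contribution localises in the collar, where it matches the transgression produced by the MV boundary map, and the topological integral splits by additivity of integration across the overlap.

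With commutativity in hand, the four outer vertical arrows are isomorphisms by the hypothesis of the lemma together with the $K^{1}$-observation above, and the Five Lemma delivers that the middle vertical map---the analytic pairing for $X = U \cup V$---is an isomorphism, which is exactly the non-degeneracy of \eqref{eq:K0pairing} on $X$. The hard part will be the MV-boundary square: it is the only place where one cannot simply transport cocycle data, but must use the geometric content of the eta-invariant together with a localisation-in-the-collar argument to identify the connecting maps on the two sides. The remaining steps are either routine diagram chasing or direct consequences of the base case Lemma \ref{snpairing0} and the exactness of $\text{Hom}(-,\R/\Z)$.
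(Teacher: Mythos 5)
Your approach---Mayer--Vietoris in both rows, exactness of the bottom row from injectivity of $\R/\Z$ as a $\Z$-module, vanishing of the $K^1$ columns on contractibles by Bott periodicity, and the Five Lemma fed by the base case Lemma \ref{snpairing0}---is exactly the strategy the paper uses; its proof of this lemma is literally the single line ``The result follows by Lemma \ref{snpairing0} and by the Five lemma.'' You go further in the right direction by flagging commutativity of the square involving the Mayer--Vietoris connecting homomorphism as the substantive point and sketching a collar-localisation argument for it, a verification the paper does not even mention, so your proposal is if anything more careful than the published proof while following the same route.
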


\begin{proof}
	The result follows by Lemma \ref{snpairing0}  and by the Five lemma.
\end{proof}

\begin{proof}(of  Theorem \ref{nondegenpair0}) The non-degeneracy of the pairing is implied by the isomorphism as in Lemma \ref{pairingisom0}. The last step is to induct on the size of open cover of $X.$ The base case is that we have shown $K^i(X,\R/\Z) \to \text{Hom}(K_i(X),\R/\Z)$ implemented by the analytic pairing  \eqref{eq:K0pairing}  is an isomorphism for $U,V$ and $U \cap V,$ where $\{U,V\}$ is a good cover of $X.$ 
	
Let $\{U_0, \ldots, U_{p-1} \}$ be any open cover of $X$ of size $p.$ Let $V=U_0 \cup \cdots \cup U_{p-2}.$ The induction hypothesis is the following: assume that the isomorphism holds for $V, U_{p-1}$  and the non-empty intersection $V \cap U_{p-1},$ which then holds for $V \cup U_{p-1}.$
Now, consider a good cover $\{U'_0,\ldots, U'_p\}$ of $X$ of size $p+1.$ Let $$V'=U'_0 \cup \cdots U'_{p-1}.$$ 
By the induction hypothesis, the isomorphism holds for $V'$ (since $V'$ is of size $p$ union) and $U'_p.$ The Mayer-Vietoris sequence for $V',U'_p$ and $V' \cap U'_p$ is 
$$\cdots \to K^i(V' \cap U'_p,\R/\Z) \to K^i(V',\R/\Z) \oplus K^i(U'_p,\R/\Z) \to K^i(V' \cup U'_p,\R/\Z) \to \cdots.$$
To claim  the isomorphism for the union $V' \cup U'_p$, we only need to consider the intersection. Note that $$V' \cap U'_p= (U'_0 \cap U'_p) \cup \cdots \cup (U'_{p-1} \cap U'_p).$$ It is the $p$-union of contractible sets. By the induction hypothesis, the isomorphism holds for $V' \cap U'_p.$ By the Mayer-Vietoris sequence and the Five lemma, we conclude that the isomorphism holds for $V' \cup U'_p.$  This completes the proof of the non-degeneracy of the analytic $K^0$ pairing.
\end{proof}




\bibliography{analyticPD19-final}{}
\bibliographystyle{amsplain}


\end{document}